\numberwithin{equation}{section} \theoremstyle{plain}
\newtheorem{thm}{Theorem}[section]
\newtheorem{prop}[thm]{Proposition}
\newtheorem{cor}[thm]{Corollary}
\newtheorem{lem}[thm]{Lemma}
\newtheorem*{hpt*}{Hipoteza}
\newtheorem*{prob*}{Problem}
\newtheorem*{thm*}{Theorem}
\newtheorem*{pro*}{Proposition}
\newtheorem*{met*}{Method}
\newtheorem*{lem*}{Lemma}
\DeclareMathOperator{\D}{d\!} \DeclareMathOperator{\E}{e}
 \DeclareMathOperator{\dom}{D}
 \DeclareMathOperator{\M}{m}
\DeclareMathOperator{\esup}{\textrm{ess sup }}
\theoremstyle{definition}
\newtheorem{exa}[thm]{{\it Example}}
\newtheorem*{exa*}{{\it Example}}
\theoremstyle{remark}
\newtheorem{rem}[thm]{{\it Remark}}
\newtheorem*{rem*}{{\it Remark}}
\def\C{\mathbb{C}}
\def\R{\mathbb{R}}
\def\Z{\mathbb{Z}}
\def\A{A}
\def\ff{\mathcal F}
\def\aa{\mathcal{A}}
\def\bb{\mathcal{B}}
\def\bb{\mathcal{B}}
\def\hh{\mathcal H}
\def\N{\mathbb N}
\def\munfty{\mu_{\mathtt{G}}}
\newcommand*{\MUN}[1]{\mu_{\mathtt{G},#1}}
\newcommand{\Le}{\leqslant}
\newcommand{\Ge}{\geqslant}
\newcommand*{\bor}[1]{\mathfrak B(#1)}
\newcommand*{\LIM}{\mathtt{lim}\,}
\newcommand*{\ILIM}{\mathtt{LIM}\,}
\newcommand*{\psnmu}{(X_n,\aa_n,\mu_n)}
\newcommand*{\psnnu}{(X_n,\aa_n,\nu_n)}
\newcommand*{\lin}{\mathtt{lin}\,}
\newcommand\frx{\mathscr{X}}
\newcommand\frs{\mathscr{F}}
\newcommand\frm{\mathscr{M}}
\newcommand\frc{\mathscr{C}}
\newcommand\frcn{\mathscr{C}_n}
\newcommand\frck{\mathscr{C}_k}
\newcommand*{\esf}{\mathsf{E}}
\newcommand*{\hsf}{\mathsf{h}}
\begin{document}

\title[Composition operators via inductive limits]{dense definiteness and boundedness of composition operators in $L^2$-spaces via inductive limits}
\author[P. Budzy\'{n}ski]{Piotr Budzy\'{n}ski}
\address{Katedra Zastosowa\'n Matematyki, Uniwersytet Rolniczy w Krakowie, ul. Balicka 253c, 30-198 Krak\'ow, Poland}
\email{piotr.budzynski@ur.krakow.pl}

\author[A. P{\l}aneta]{Artur P{\l}aneta}
\address{Katedra Zastosowa\'n Matematyki, Uniwersytet Rolniczy w Krakowie, ul. Balicka 253c, 30-198 Krak\'ow, Poland}
\email{artur.planeta@ur.krakow.pl}
\subjclass[2010]{Primary 47B33, 47B37; secondary 47A05, 28C20.}
\keywords{Composition operator in $L^2$-space, inductive limits of Hilbert spaces, inductive limits of operators, gaussian measure.}
\begin{abstract}
The question of dense definiteness and boundedness of composition operators in $L^2$-spaces are studied by means of inductive limits of operators. Methods based on projective systems of measure spaces and inductive limits of $L^2$-spaces are developed. Illustrative examples are presented.
   \end{abstract}
\maketitle
\section{Introduction}
Bounded composition operators (in $L^2$-spaces) have been extensively studied since the works of Koopman and von Neumann (see \cite{koo,koo-neu}). They played a central role in ergodic theory and proved to be important objects of investigations in operator theory. Many properties of these operators were fully characterized (see the monograph \cite{sin-man} and references therein). Unbounded composition operators attracted attention recently but they turned out to have very interesting attributes (cf. \cite{bud-dym-pla,bud-jab-jun-sto1,bud-jab-jun-sto2,bud-jab-jun-sto3,cam-hor,jab, kum-kum}). In particular, they proved to be a source for surprising examples (cf. \cite{bud,bud-dym-jab-sto,bud-jab-jun-sto5,jab-jun-sto}).

In this paper we investigate the questions of dense definiteness and boundedness of composition operators. These properties have characterizations (cf. \cite{bud-jab-jun-sto4,cam-hor,nor}), which in a more concrete situations seem difficult to apply. For example, this is the case of a composition operator induced by an infinite matrix in $L^2(\munfty)$, where $\munfty$ is the gaussian measure on $\R^\infty$. Even in bounded case in a concrete situations the question of boundedness may be highly non-trivial and may lead to very interesting results (cf. \cite{dan-sto,mla,sto,sto-sto}). We show that a technique based on inductive limits might be helpful when dealing with these problems. We deliver tractable criteria for the above mentioned properties. This is possible if the $L^2$-space (in which a given composition operator acts) is an inductive limit of $L^2$-spaces with underlying measure spaces forming a projective system (see Section \ref{projsyst}). In this case we prove that both the dense definiteness and boundedness can be expressed in terms of asymptotic behaviour of appropriate Radon-Nikodym derivatives (see Theorems \ref{absconA} and \ref{sufgen}). We illustrate this with examples.
\section{Preliminaries}
\subsection{Notation}
In all what follows $\Z$ stands for the set of integers and $\N$ for the set of positive integers; $\R$ denotes the set of real numbers, $\C$ denotes the set of complex numbers. If $X$ is any subset of $\R$, then by $X_+$ we understand the set $\{x\in X\colon x\Ge 0\}$. Set $\overline{\R}_+=\R_+\cup\{\infty\}$. By $\sigma_1\triangle\sigma_2$ we denote the symmetric difference $(\sigma_1\cup\sigma_2)\setminus(\sigma_1\cap\sigma_2)$ between sets $\sigma_1$ and $\sigma_2$. For a topological space $X$, $\bor X$ stands for the family of Borel subsets of $X$. If $\{X_n\}_{n\in\N}$ is a sequence of subsets of a set $X$, then ``$X_n\nearrow X$ as $n\to\infty$''means that $X_n\subseteq X_{n+1}$ for all $n\in\N$ and $X=\bigcup_{n\in\N} X_n$.

Let $\hh$ be a (complex) Hilbert space and $T$ be an operator in $\hh$ (all operators are assumed to be linear in this paper). By $\dom(T)$ we
denote the domain of $T$. $\overline{T}$ stands for the closure of $T$. $\bb(\hh)$ denotes the Banach space of all bounded operators
on $\hh$ (with usual supremum norm). If $T$ is closable and $\ff$ is a subspace of $\hh$ such that $\overline{T|_\ff}=\overline{T}$, then $\ff$ is said to be  a core of $T$.

Let $(X,\aa,\mu)$ be a measure space. By $(\aa)_\mu$ we denote the collection of all $\sigma\in\aa$ such that $\mu(\sigma)<\infty$. 
Let $1\Le p<\infty$. The space of all $\aa$-measurable complex-valued functions such that $\int|f|^p\D\mu<\infty$ is denoted by $L^p(\mu)=L^p(X,\aa,\mu)$; $L^\infty(\mu)=L^\infty(X,\aa,\mu)$ stands for the space of all complex-valued and $\mu$-essentially bounded functions on $X$.

Now, let $\{\mu_{n}\}_{n\in\N}$ be a sequence of non-negative measures, each $\mu_n$ acting on a measurable space $(X_n,\aa_n)$. Let $\{f_n\colon n\in\N\}$ be a family of functions such that $f_n\in L^1(\mu_n)$ for every $n\in\N$. Then $\frm\big(\{f_n\colon n\in\N\}\big)$ stands for the family composed of monotonically increasing convex functions $G:[0,\infty)\to[0,\infty)$ such that $\lim_{t\to\infty}G(t)/t=\infty$ and $\sup_{n\in\N}\int G(|f_n|)d\mu_n<\infty$.
\subsection{Composition operators}
Let $(X,\aa,\mu)$ be a $\sigma$-finite measure space and let $A\colon X\to X$ be an $\aa$-measurable. Define the measure $\mu\circ A^{-1}$ on $\aa$ by setting $\mu\circ A^{-1} (\sigma)=\mu(A^{-1}(\sigma))$, $\sigma\in\aa$. If $\mu\circ A^{-1}$ is absolutely continuous with respect to $\mu$, then $A$ is said to be nonsingular transformation of $X$.  If $A$ is nonsingular, then the linear operator
\begin{align*}
C_A\colon L^2(\mu) \supseteq \dom(C_A) \to L^2(\mu)
\end{align*}
given by
   \begin{align*}
\dom(C_A)=\{f \in L^2(\mu) \colon f\circ A \in L^2(\mu)\} \text{ and } C_A f=f\circ A \text{ for } f\in \dom(C_\phi),
   \end{align*}
is well-defined and closed in $L^2(\mu)$ (cf. \cite[Proposition 1.5]{bud-jab-jun-sto1}). Such an operator is the {\it composition operator induced by $A$} and $A$ is the {\em symbol} of $C_A$. Usually, properties of $C_A$ are written in terms of the Radon-Nikodym derivative
\begin{align*}
\hsf^A=\frac{\D \mu\circ A^{-1}}{\D\mu}.
\end{align*}
By the measure transport theorem (\cite[Theorem 1.6.12]{ash-dol}) for every $\aa$-measurable ($\C$- or $\overline{\R}$-valued) function $g$ we have
\begin{align*}
\int_X g\cdot \hsf^A\D\mu=\int_X g\circ A\D\mu.
\end{align*}
In particular, for every $f\in\dom(C_A)$ there is
\begin{align}\label{RN}
\int_X |f|^2\hsf^A\D\mu=\int_X |f\circ A|^2\D\mu.
\end{align}
It is known (cf.\ \cite[Proposition 4.2]{bud-jab-jun-sto1}) that
\begin{align}\label{dense}
\text{$\overline{\dom(C_A)}=L^2(\mu)$ if and only if $\hsf^A<\infty$ a.e.\ $[\mu]$}.
\end{align}
If $\hsf^A$ belongs to $L^\infty(\mu)$, then $C_A$ is bounded on $L^2 (\mu)$ (and {\em vice versa}) and
\begin{align}\label{norm}
\|C_A\|=\|\hsf^A\|_{L^\infty(\mu)}^{1/2}.
\end{align}
Conditional expectation is indispensable when investigating composition operators in $L^2$-spaces. We collect here some of its properties.  Set $A^{-1}(\aa)=\{A^{-1}(\sigma)\colon \sigma \in \aa\}$. Assume that $\hsf^A < \infty$ a.e.\ $[\mu]$. Then the measure $\mu|_{A^{-1}(\aa)}$ is $\sigma$-finite (cf.\ \cite[Proposition 3.2]{bud-jab-jun-sto1}) and hence for every $\aa$-measurable function  $f\colon X \to \overline{\R}_+$ there exists a unique (up to sets of $\mu$-measure zero) $A^{-1}(\aa)$-measurable function\footnote{For simplicity we do not make the dependence of $\esf(f)$ on $A$ explicit} $\esf(f)\colon X \to \overline{\R}_+$ such that for every
$\aa$-measurable function $g\colon X \to \overline{\R}_+$,
    \begin{align} \label{CE-3}
    \int_X g \circ A \cdot f \D\mu= \int_X g \circ A \cdot \esf(f) \D\mu.
    \end{align}
We call $\esf(f)$ the {\em conditional expectation} of $f$ with respect to $A^{-1}(\aa)$ (see \cite{bud-jab-jun-sto1, bud-jab-jun-sto2, bud-jab-jun-sto3} for more information on $\esf(\cdot)$ in the context of unbounded composition operators and further references). It is known that if $f\colon X \to \overline{\R}_+$ is $\aa$-measurable function, then $\esf(f) = g\circ A$ a.e.\ $[\mu]$, where $g\colon X \to \overline{\R}_+$ is an $\aa$-measurable function such that $g=0$ a.e.\ $[\mu]$ on $\{\hsf^A=0\}$. Set $\esf(f) \circ A^{-1} = g$ a.e.\ $[\mu]$. This definition is correct (see \cite{cam-hor} and \cite[Appendix B]{bud-jab-jun-sto1}).
Moreover, we have
   \begin{align} \label{fifi}
    (\esf(f) \circ A^{-1})\circ A = \esf(f) \quad \text{a.e.\ $[\mu|_{A^{-1}(\aa)}]$.}
   \end{align}
It is also known that the map $f\mapsto \esf(f)$ can be extended linearly from $\{f\in L^2(\mu)\colon f\Ge 0\}$ onto the whole $L^2(\mu)$ in a way that $\esf(\cdot)$ becomes an orthogonal projection acting on $L^2(\mu)$. This (extended) conditional expectation $\esf(\cdot)$ satisfies \eqref{CE-3} and \eqref{fifi} with $f, g\in L^2(\mu$).

Let $\mu$ be the Borel measure on $\R^n$, $n\in\N$,  given by $\D\mu=\rho \D \M_n$, where  $\rho\colon\R^n\to(0,\infty)$ is a Borel function and $\M_n$ is the $n$-dimensional Lebesgue measure on $\R^n$. If $A$ is an invertible linear transformation of $\R^n$, then by the measure transport theorem we have
\begin{align*}
\hsf^{A}=\tfrac{1}{|\det A|} \tfrac{\rho\circ A^{-1}}{\rho};
\end{align*}
moreover, if $C_A$ is bounded on $L^2(\mu)$, then
\begin{align*}
\|C_A\|^2=\tfrac{1}{|\det A|} \big\|\tfrac{\rho\circ A^{-1}}{\rho}\big\|_{L^\infty(\mu)}.
\end{align*}
In particular, if $\rho(x_1,\ldots,x_n)=\E^{-\frac12(x_1^2+\cdots+x_n^2)}$, we have
    \begin{align}\label{norminv}
    \hsf^{A}(x)=\big|\det A\big|^{-1} \exp \tfrac12 \Big( \|x\|^2-\|A^{-1}(x)\|^2\Big)\quad \text{for $\M_n$-a.e. $x\in\R^n$}.
    \end{align}
\subsection{Inductive limits}
Suppose $\{\hh_n\}_{n\in\N}$ is a sequence of Hilbert spaces. We say that a Hilbert space $\hh$ is the {\it inductive limit} of $\{\hh_n\}_{n\in\N}$
if there are isometries $\Lambda_k^l:\hh_k\rightarrow\hh_l$, $k\leqslant l$, and $\Lambda_k:\hh_k\rightarrow\hh$ such that the following conditions are satisfied:
\begin{enumerate}
 \item[($\mathtt{I}_1$)] $\Lambda_k^k$ is the identity operator on $\hh_k$,
 \item[($\mathtt{I}_2$)] $\Lambda_k^m=\Lambda_l^m\circ\Lambda_k^l$ for all $k\leqslant l\leqslant m$,
 \item[($\mathtt{I}_3$)] $\Lambda_k=\Lambda_l\circ\Lambda_k^l$ for all $k\leqslant l$,
 \item[($\mathtt{I}_4$)] $\hh=\overline{\bigcup_{n\in \N}\Lambda_n\hh_n}$.
\end{enumerate}
We write $\hh=\ILIM\hh_n$ then.

Assume that $\hh=\ILIM\hh_n$. For $n\in\N$, let $C_n$ be an operator in $\hh_n$. Consider the subspace $\dom_\infty=\dom_\infty(\{C_n\}_{n\in \N})$ of $\hh$ given by
\begin{align*}
\dom_\infty=\bigcup_{k\in\N}\{\Lambda_k f\ |\ \exists M \Ge k \colon \Lambda_k^m f\in \dom(C_m) \text{ for all } m\Ge M\}
\end{align*}
and define the operator $\LIM C_n$ in $\hh$ by
    \begin{align*}
    &\dom(\LIM C_n)=\{\Lambda_kf\in \dom_\infty\colon\lim_{m\rightarrow\infty} \Lambda_m C_m \Lambda_k^m f \text{ exists}\}\\
    &(\LIM C_n) \Lambda_kf=\lim_{m\rightarrow\infty} \Lambda_m C_m\Lambda_k^m f,\quad \Lambda_kf\in \dom(\LIM C_n).
\end{align*}
We call $\LIM C_n$ the {\em inductive limit} of $\{C_n\}_{n\in\N}$.

The following lemma is surely folklore. We include the proof for completeness.
\begin{lem}\label{indbound}
Let $\hh=\ILIM\hh_n$ and let $C_n\in\bb(\hh_n)$, for $n\in\N$. Assume that the operator $\LIM C_n$ is densely defined in $\hh$. Then the following assertions hold.
\begin{enumerate}
\item If $\sup_{n\in\N}\|C_n|_{\Lambda_l^n\hh_l}\|<\infty$ for all $l\in\N$, then
$\bigcup_{m=1}^\infty \Lambda_m\hh_m\subseteq\dom (\LIM C_n)$ and $(\LIM C_n)|_{\Lambda_k\hh_k}$ is bounded for all $k\in\N$.  
\item If $\sup_{n\in\N}\|C_n\|<\infty$, then $\LIM C_n$ is closable and $\overline{\LIM C_n}\in\bb(\hh)$.
\end{enumerate}
\end{lem}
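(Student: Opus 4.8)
Both parts rest on one elementary estimate: for $m\Ge k$ the composite $\Lambda_mC_m\Lambda_k^m$ maps $\hh_k$ into $\hh$, and since $\Lambda_m$ and $\Lambda_k^m$ are isometries,
\[
\|\Lambda_mC_m\Lambda_k^mf\|=\|C_m\Lambda_k^mf\|\Le\|C_m|_{\Lambda_k^m\hh_k}\|\,\|f\|,\qquad f\in\hh_k.
\]

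I would settle (2) first. Under $\sup_n\|C_n\|<\infty$ the right-hand side above is at most $(\sup_n\|C_n\|)\|f\|$ uniformly in $m$, so for $\Lambda_kf\in\dom(\LIM C_n)$, letting $m\to\infty$ yields $\|(\LIM C_n)\Lambda_kf\|\Le(\sup_n\|C_n\|)\|\Lambda_kf\|$. Hence $\LIM C_n$ is bounded on its (dense) domain, and I would finish with the standard fact that a densely defined operator bounded on its domain is closable with closure in $\bb(\hh)$ of the same norm: closability because $x_i\to0$ in the domain and $(\LIM C_n)x_i\to y$ force $y=0$, and the closure is everywhere defined because a bounded operator sends an approximating (hence Cauchy) sequence to a Cauchy one.

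For (1) I would fix $k\in\N$, put $T_m=\Lambda_mC_m\Lambda_k^m\colon\hh_k\to\hh$ for $m\Ge k$, and observe, via the opening estimate, that $B_k\okr\sup_{m\Ge k}\|T_m\|\Le\sup_n\|C_n|_{\Lambda_k^n\hh_k}\|$, which is finite by the hypothesis of (1) applied with $l=k$. Let $G_k=\{f\in\hh_k\colon\lim_mT_mf\text{ exists in }\hh\}$. The first (routine) step is that $G_k$ is a closed linear subspace: linearity is clear, and if $f_j\to f$ with $f_j\in G_k$, then the uniform bound $\|T_m(f-f_j)\|\Le B_k\|f-f_j\|$ together with the fact that $\{T_mf_j\}_m$ is Cauchy forces $\{T_mf\}_m$ to be Cauchy, hence convergent in $\hh$. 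The second step is bookkeeping: since each $C_m$ is everywhere defined one has $\dom_\infty=\bigcup_j\Lambda_j\hh_j$, and because the limit in the definition of $\dom(\LIM C_n)$ does not depend on the chosen representative $\Lambda_jf$ (use $(\mathtt{I}_2)$), it follows that $\Lambda_k\hh_k\cap\dom(\LIM C_n)=\Lambda_kG_k$; with $(\mathtt{I}_3)$ one also gets $\Lambda_jG_j\subseteq\Lambda_kG_k$ for $j\Le k$ and $\bigcup_j\Lambda_jG_j=\dom(\LIM C_n)$.

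It then remains to prove $G_k=\hh_k$, equivalently $\Lambda_kG_k=\Lambda_k\hh_k$; as $\Lambda_kG_k$ is already a closed subspace of $\Lambda_k\hh_k$, it suffices to prove it is dense there, and once this is in hand the first assertion of (1) follows for every $k$, while the boundedness of $(\LIM C_n)|_{\Lambda_k\hh_k}$ is immediate by letting $m\to\infty$ in $\|T_mf\|\Le B_k\|f\|$, which gives $\|(\LIM C_n)\Lambda_kf\|\Le B_k\|\Lambda_kf\|$. \emph{The density of $\Lambda_kG_k$ in $\Lambda_k\hh_k$ is the step I expect to be the main obstacle.} The difficulty is that the density of $\dom(\LIM C_n)$ in $\hh$ does not transfer formally down to the subspace $\Lambda_k\hh_k$---the orthogonal projection of $\hh$ onto $\Lambda_k\hh_k$ need not carry $\dom(\LIM C_n)$ into itself---so one cannot simply project a generic $\Lambda_kf$; the plan would be to take approximants of $\Lambda_kf$ from $\dom(\LIM C_n)=\bigcup_j\Lambda_jG_j$ at high levels $j$ and pull them back into $\Lambda_k\hh_k$ using the interplay of the $C_m$ with the bonding maps $\Lambda_j^m$ together with the uniform bounds $\sup_n\|C_n|_{\Lambda_l^n\hh_l}\|<\infty$ for \emph{all} $l$, arranging that the correction preserves membership in the domain.
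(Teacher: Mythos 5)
Your part (2) is fine: boundedness of $\LIM C_n$ on its dense domain follows from the uniform estimate exactly as you say, and the standard closability argument finishes it (the paper routes (2) through (1), but your more direct version is equally valid). The problem is part (1): you have only reduced it to the density of $\Lambda_kG_k$ in $\Lambda_k\hh_k$ and then stopped, flagging that step as the main obstacle with only a vague plan for it. That reduction is not the right move, and the obstacle you worry about (that density of $\dom(\LIM C_n)$ in $\hh$ does not project down into $\Lambda_k\hh_k$) never needs to be overcome: one should prove $G_k=\hh_k$ directly, not via closedness plus density.

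The missing idea is a three-term estimate carried out at the level of the approximant, which is exactly where the hypothesis for \emph{all} $l$ (not just $l=k$) enters. Fix $f\in\hh_k$ and $\varepsilon>0$. By density of $\dom(\LIM C_n)$ pick $l\Ge k$, $g\in\hh_l$ with $\Lambda_lg\in\dom(\LIM C_n)$, $\|\Lambda_kf-\Lambda_lg\|\Le\varepsilon$, and $N$ such that $\|\Lambda_mC_m\Lambda_l^mg-\Lambda_{m'}C_{m'}\Lambda_l^{m'}g\|\Le\varepsilon$ for $m,m'\Ge N$. For $m\Ge\max\{N,l\}$ write, using ($\mathtt{I}_2$),
\begin{align*}
\Lambda_k^mf-\Lambda_l^mg=\Lambda_l^m\bigl(\Lambda_k^lf-g\bigr)\in\Lambda_l^m\hh_l,
\qquad \|\Lambda_k^lf-g\|=\|\Lambda_kf-\Lambda_lg\|\Le\varepsilon,
\end{align*}
so that $\|\Lambda_mC_m\Lambda_k^mf-\Lambda_mC_m\Lambda_l^mg\|\Le\sup_{n\in\N}\|C_n|_{\Lambda_l^n\hh_l}\|\,\varepsilon$, and similarly for the $m'$ term; combining with the middle term shows $\{\Lambda_mC_m\Lambda_k^mf\}_m$ is Cauchy, i.e.\ $f\in G_k$. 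The error $\Lambda_k^lf-g$ lives in $\hh_l$, not in $\hh_k$, and it is the uniform bound on the restrictions to $\Lambda_l^n\hh_l$ at that higher level $l$ that controls it; no pulling back of the approximant into $\Lambda_k\hh_k$ is required. With this, your closed-subspace scaffolding becomes unnecessary, and the boundedness of $(\LIM C_n)|_{\Lambda_k\hh_k}$ follows from your estimate $\|T_mf\|\Le B_k\|f\|$ as you indicated.
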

\begin{proof}
Fix $k\in\N$ and chose $f\in\hh_k$. Since $\overline{\dom\big(\LIM C_n\big)}=\hh$, for any $\varepsilon>0$ there exists $l\in\N$, $g\in\hh_l$ and $N\in\N$
such that $\|\Lambda_k f-\Lambda_lg\|\Le \varepsilon$ and $\|\Lambda_m C_m\Lambda_k^m g-\Lambda_{m'} C_{m'}\Lambda_k^{m'} g\|\Le \varepsilon$ for every $m,m'\Ge N$. We may assume that $l\Ge k$. Then for all $m,m'\Ge \max\{N,l\}$ we have
    \begin{align*}
    \|\Lambda_m C_m\Lambda_k^m f-\Lambda_{m'} C_{m'}\Lambda_k^{m'} f\|
        &\Le
    \|\Lambda_m C_m\Lambda_k^m f-\Lambda_m C_m\Lambda_l^m g\|\\
        &+
    \|\Lambda_m C_m\Lambda_l^m g-\Lambda_{m'} C_{m'}\Lambda_l^{m'} g\|\\
        &+
    \|\Lambda_{m'} C_{m'}\Lambda_l^{m'} g-\Lambda_{m'} C_{m'}\Lambda_k^{m'} f\|\\
        &\Le
    \varepsilon \Big(1+2\sup_{n\in\N}\|C_n|_{\Lambda_k^n\hh_k}\| \Big).
    \end{align*}
This implies that $\{\Lambda_m C_m \Lambda_k^m f\}_{m=1}^\infty$ is a Cauchy sequence and thus, by definition,  $\Lambda_kf\in\dom(\lim C_n)$. Since $f$ can be chosen arbitrarily we get the inclusion $\Lambda_k\hh_k\subseteq\dom (\LIM C_n)$. The fact that $(\LIM C_n)|_{\Lambda_k\hh_k}$ is bounded follows immediately from $\sup_{n\in\N}\|C_n|_{\Lambda_k^n\hh_k}\|<\infty$, $k\in\N$, and definition of $\LIM C_n$.

By (1) we have $\bigcup_{n=1}^\infty \Lambda_n\hh_n=\dom(\LIM C_n)$ and $\|\LIM C_n f\|\Le\sup_{n\in\N}\|C_n\| \|f\|$ for all $f\in\bigcup_{n=1}^\infty \Lambda_n\hh_n$. This and $\hh=\overline{\bigcup_{n\in \N}\Lambda_n\hh_n}$ implies the claim of (2).
\end{proof}
Regarding Lemma \ref{indbound} it is worth noticing that condition $\sup_{n\in\N}\|C_n|_{\Lambda_l^n\hh_l}\|<\infty$, $l\in\N$, is not sufficient for $\overline{\LIM C_n}\in\bb(\hh)$. This is shown in the following example.
\begin{exa}
Let $\hh_n=L^2([\frac{1}{n},n])$ for $n\in\N$ and $\hh=L^2((0,\infty))$, where, for a given sub-interval $P$ of $\R$, $L^2(P)$ denotes the Hilbert space of all complex functions on $P$, which are square-integrable with respect to Lebesgue measure $\M_1$ on $\R$ (with a standard inner product). Put
\begin{align*}
(\Lambda_kf)(x)=\left\{%
\begin{array}{ll}
    f(x), & \hbox{for $x\in[\frac{1}{k},k]$;} \\
    0, & \hbox{for $x\in(0,\frac{1}{k})\cup(k,\infty)$.}
\end{array}%
\right.
\end{align*}
and $\Lambda_k^n f=\Lambda_k f|_{[\frac{1}{n},n]}$, where $f\in\hh_k$.
It is easily seen that $\hh=\ILIM \hh_n$. Define $\phi(x)=\frac{1}{x}$ for $x>0$. Let $\phi_n=\phi|_{[\frac{1}{n},n]}$ be a restriction of $\phi$ to $[\frac{1}{n},n]$.  Then by the change-of-variable theorem (cf. \cite[Theorem 7.26]{rud}) we have
\begin{align*}
\| C_{\phi_n}\Lambda_k^n f\|^2&=\int_{[\frac{1}{k},k]}\big|f\big(\tfrac{1}{t}\big)\big|^2\D\M(t)\\
&= \int_{[\frac{1}{k},k]} \frac{|f(t)|^2}{t^2}\D \M(t)\Le k^2\| f\|^2,\quad f\in\hh_k, \quad k\in\N.
\end{align*}
This implies that $\sup_{n\in\N}\|C_{\phi_n}|_{\Lambda_l^n\hh_l}\|\Le l$ for $l\in\N$. Observe that $\overline{\LIM C_{\phi_n}}= C_\phi$. Indeed, since $C_\phi\circ\Lambda_n=\Lambda_n\circ C_{\phi_n}$ we see that $\LIM C_{\phi_n}\subseteq C_\phi$. On the other hand, if $f\in\dom(C_\phi)$, then $\Lambda_mf_m \to f$ and $\LIM C_{\phi_n}(\Lambda_mf_m )=C_\phi(\Lambda_mf_m)\to C_\phi f$ as $m\to\infty$, where $f_n=f|_{[\frac{1}{n},n]}$ for $n\in\N$. This proves that $\overline{\LIM C_{\phi_n}}= C_\phi$. Since $\|C_\phi\chi_{[\frac{1}{n},1]}\|=n-1$ for every $n\in\N$, we see that $\overline{\LIM C_{\phi_n}}\notin \bb(\hh)$.
\end{exa}

Below we give an example of a sequence $\{X_n\}_{n\in\N}$ of sets and a sequence $\{\phi_n\}_{n\in\N}$ of transformations
such that the inductive limit $\LIM C_{\phi_n}$ of composition operators is not densely defined.
\begin{exa}
For $n\in\N$, let $X_n=\{1,2,\ldots,n\}$ and let $\mu_n$ be the atomic measure on $X_n$ given by $\mu_n(\{k\})=1$, $k\in X_n$.
It is evident that $\ell^2(\N)=\ILIM L^2(\mu_n)$, where $\ell^2(\N)$ denotes the Hilbert space
of all square-summable complex sequences enumerated by natural numbers (with the standard inner product).
For $n\in\N$, we define a transformation $\phi_n$ of $X_n$ by $\phi(1)=2$, $\phi(2)=3$, $\ldots$ , $\phi(n)=1$.
Obviously, $C_{\phi_n}$ is a bounded operator on $\hh_n$ for every $n\in\N$. In fact, it is unitary one.
However, $C_{\phi_n}\chi_{\{1\}}=\chi_{\{n\}}$ for all $n\in\N$, which implies that $\chi_{\{1\}}$ does not belong to $\dom(\lim C_n)$.
In particular, this means that $\lim C_n$ is not densely defined.
\end{exa}
\begin{rem}
Questions whether $\LIM C_n$ is densely defined and closable are delicate ones. Marchenko type conditions (see \cite{mar}, also \cite{jan}), implying positive answers to both of them, seem difficult to apply in the context of composition operators.
\end{rem}
\section{Projective systems of measure spaces}\label{projsyst}
In this section we study inductive limits of composition operators over $\sigma$-finite measure spaces endowed with projective structure.

Suppose that $\{\psnmu\}_{n\in\N}$ is  a sequence of (not necessarily $\sigma$-finite) measure spaces. If there exist surjective mappings
\begin{equation*}
\delta_m^n:X_m\rightarrow X_n,\quad n\leqslant m,
\end{equation*}
satisfying the following conditions
\begin{enumerate}
 \item[($\mathtt{P}_1$)] $\delta_m^n$ is $(\aa_m,\aa_n)$-measurable for all $n\leqslant m$,
 \item[($\mathtt{P}_2$)] $\delta_k^n=\delta_m^n\circ\delta_k^m$ for all $n\leqslant m\leqslant k$,
 \item[($\mathtt{P}_3$)] $\delta_n^n$ is the identity mapping on $X_n$ for all $n$,
\end{enumerate}
then $\{\psnmu\}_{n\in\N}$ is called a {\em projective system}. We say that a measure space $(X,\aa,\mu)$ is a {\em target space} of the projective system $\{\psnmu\}_{n\in\N}$ if there are surjective mappings
\begin{equation*}
\delta^n:X\rightarrow X_n,\quad n\in\N,
\end{equation*}
that satisfy the following conditions
\begin{enumerate}
    \item[($\mathtt{P}_4$)] $\delta^n$ is $(\aa,\aa_n)$-measurable for all $n$,
    \item[($\mathtt{P}_5$)] $\delta^n=\delta_m^n\circ\delta^m$ for all $n\leqslant m$,
    \item[($\mathtt{P}_6$)] $\aa=\sigma(\{(\delta^n)^{-1}(\sigma)\ :\ \sigma\in\aa_n,\ n\in\N\})
        $\footnote{if $\bb$ is any family of subsets of a set $X$, then by $\sigma(\bb)$ we denote
        the smallest $\sigma$-algebra in $X$ containing family $\bb$.},
    \item[($\mathtt{P}_7$)] $\mu((\delta^n)^{-1}(\sigma))=\lim\limits_{m\rightarrow\infty}\mu_m((\delta_m^n)^{-1}(\sigma))$\footnote{The sets of the form $\big(\delta_k\big)^{-1}(\sigma)$ for $\sigma\in\aa_k$ and $k\in\N$ are called {\em cylinder} sets.}
        for all $\sigma\in\aa_n$ and $n$.
\end{enumerate}
If this is the case, then we write $(X,\aa,\mu)=\LIM\psnmu$ and call $\mu$ a {\it target measure} of $\{\mu_n\}_{n\in\N}$.

Suppose $(X,\aa,\mu)=\LIM\psnmu$. Let $k\in\N$. It is clear that if $(X_k,\aa_k,\mu_k)$ is $\sigma$-finite and
\begin{align}\label{18.05.2011.01}
\mu\circ(\delta^k)^{-1}\ll \mu_k\ \text{ and }\ \frac{\D\mu\circ(\delta^k)^{-1}}{\D\mu_k}\in L^{\infty}(\mu_k),
\end{align}
then the operator
\begin{align*}
\Delta_k:L^2(\mu_k)\ni f\mapsto f\circ\delta^k\in L^2(\mu),
\end{align*}
is well-defined and bounded. Now, if for every $k\in\N$, $(X_k,\aa_k,\mu_k)$ is $\sigma$-finite, \eqref{18.05.2011.01} holds and $\Delta_k$ is an isometry, then we call the target space $(X,\aa,\mu)$ {\em isometric} and write $(X,\aa,\mu)=\ILIM\psnmu$.
\begin{lem}\label{lpisom}
Let $(X,\aa,\mu)=\LIM\psnmu$ and $k\in\N$. If $(X_k,\aa_k,\mu_k)$ is $\sigma$-finite, condition \eqref{18.05.2011.01} is satisfied and $\Delta_k$ is an isometry on $L^2(\mu_k)$, then it is an isometry on $L^p(\mu_k)$ for all $1\Le p<\infty$ and the measure $\mu$ is $\sigma$-finite.
\end{lem}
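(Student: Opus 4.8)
The plan is to reduce the whole statement to an explicit identification of the Radon--Nikodym derivative $w_k\okr\frac{\D\mu\circ(\delta^k)^{-1}}{\D\mu_k}$, which by \eqref{18.05.2011.01} is a well-defined element of $L^{\infty}(\mu_k)$.

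First I would record the change-of-variables identity coming from the measure transport theorem: since $\delta^k$ is $(\aa,\aa_k)$-measurable, for every $\aa_k$-measurable $g\colon X_k\to\overline{\R}_+$ one has
\[
\int_X g\circ\delta^k\,\D\mu=\int_{X_k}g\,\D\big(\mu\circ(\delta^k)^{-1}\big)=\int_{X_k}g\,w_k\,\D\mu_k .
\]
Taking $g=|f|^2$ for $f\in L^2(\mu_k)$ gives $\|\Delta_k f\|_{L^2(\mu)}^2=\int_{X_k}|f|^2 w_k\,\D\mu_k$, so the hypothesis that $\Delta_k$ is an isometry on $L^2(\mu_k)$ reads $\int_{X_k}|f|^2(w_k-1)\,\D\mu_k=0$ for all $f\in L^2(\mu_k)$. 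Testing with $f=\chi_\sigma$, $\sigma$ ranging over $(\aa_k)_{\mu_k}$, and using the $\sigma$-finiteness of $\mu_k$ to exhaust $X_k$ by such sets, I would conclude $w_k=1$ a.e.\ $[\mu_k]$.

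With $w_k=1$ the $L^p$-claim is immediate: for $1\Le p<\infty$ and $f\in L^p(\mu_k)$ the displayed identity with $g=|f|^p$ yields $\int_X|f\circ\delta^k|^p\,\D\mu=\int_{X_k}|f|^p\,\D\mu_k$, hence $\Delta_k f=f\circ\delta^k\in L^p(\mu)$ and $\|\Delta_k f\|_{L^p(\mu)}=\|f\|_{L^p(\mu_k)}$. For the $\sigma$-finiteness of $\mu$ I would choose a cover $X_k=\bigcup_{j\in\N}Y_j$ with $Y_j\in(\aa_k)_{\mu_k}$; then $X=(\delta^k)^{-1}(X_k)=\bigcup_{j\in\N}(\delta^k)^{-1}(Y_j)$ and $\mu\big((\delta^k)^{-1}(Y_j)\big)=\int_{Y_j}w_k\,\D\mu_k=\mu_k(Y_j)<\infty$, so $\{(\delta^k)^{-1}(Y_j)\}_{j\in\N}$ is the required countable cover of $X$ by sets of finite $\mu$-measure.

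No step is genuinely hard here. The only point requiring some care is the passage from the integral identity $\int_{X_k}|f|^2(w_k-1)\,\D\mu_k=0$, valid a priori only for $f\in L^2(\mu_k)$ (equivalently, for indicators of finite-measure sets), to the pointwise conclusion $w_k=1$ a.e.\ $[\mu_k]$ --- and this is exactly where the $\sigma$-finiteness of $\mu_k$ is used --- together with the routine check that the measure transport theorem is applicable to the possibly non-$\sigma$-finite measure $\mu$ and the pushforward $\mu\circ(\delta^k)^{-1}$.
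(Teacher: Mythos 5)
Your proposal is correct and follows essentially the same route as the paper: both use the measure transport identity to show the isometry hypothesis forces $\frac{\D\mu\circ(\delta^k)^{-1}}{\D\mu_k}=1$ a.e.\ $[\mu_k]$, then deduce the $L^p$ isometry and obtain $\sigma$-finiteness of $\mu$ by pulling back an exhausting sequence of finite-measure sets from $X_k$. The only difference is that you spell out the passage from the integral identity to the pointwise conclusion, which the paper leaves implicit.
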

\begin{proof}
First, we observe that $\Delta_k$ is an isometry on $L^p(\mu_k)$ if and only if the Radon-Nikodym derivative $\tfrac{\D\mu\circ(\delta^k)^{-1}}{\D\mu_k}=1$ almost everywhere (with respect to $\mu_k$). This follows directly from
the equality
    \begin{align*}
    \int_{X_k}\tfrac{\D\mu\circ(\delta^k)^{-1}}{\D\mu_k}\,f \D\mu_k=\int_{X}f\circ \delta^k\D\mu,
    \end{align*}
valid for every  $\aa_k$-measurable non-negative function $f$ (it holds by the measure transport theorem). Hence $\Delta_k$ is an isometry on $L^p(\mu_k)$ for every $1\Le p<\infty$. Now, by $\sigma$-finiteness of $\mu_k$, there exists $\{\sigma_n\}_{n=1}^\infty\subseteq (\aa_k)_{\mu_k}$ such that $\sigma_n\nearrow X_k$ as $n\to\infty$. Clearly, if $\Delta_k$ is isometric, then
\begin{align*}
\mu\big((\delta_k)^{-1}(\sigma_n)\big)=\int_X \chi_{\sigma_n}\circ \delta_k\,\D\mu=\int_{X_k} \chi_{\sigma_n}\,\D\mu_k=\mu_k(\sigma_n),\quad n\in\N.
\end{align*}
Hence  $\{(\delta_k)^{-1}(\sigma_n)\}_{n=1}^\infty\subseteq (\aa)_{\mu}$. Since $(\delta_k)^{-1}(\sigma_n)\nearrow X$ as $n\to\infty$, we see that $\mu$ is $\sigma$-finite.
\end{proof}
\begin{rem}
It is worth noticing, that if $\{(X_n,\aa_n,\mu_n)\}_{n\in\N}$ and $(X,\aa,\mu)$ satisfy all the conditions of definition of a target space except
condition $(\mathtt{P}_7)$ and all the operators $\varDelta_k$ are isometries, then $(\mathtt{P}_7)$ is automatically satisfied. Indeed, take $m\Ge n$.
Then, by $(\mathtt{P}_5)$ and Lemma \ref{lpisom}, for every $\sigma\in(\aa_n)_{\mu_n}$ we have
    \begin{align*}
    \mu\big((\delta^n)^{-1}(\sigma)\big)
    &=\mu\big((\delta_m^n\circ\delta^m)^{-1}(\sigma)\big)=\mu\Big((\delta^m)^{-1}\big((\delta_m^n)^{-1}(\sigma)\big)\Big)\notag\\
    &=\int_X|\Delta_m\chi_{(\delta_m^n)^{-1}(\sigma)}|\D\mu=\int_{X_m}|\chi_{(\delta_m^n)^{-1}(\sigma)}|\D\mu_m\\
    &=\mu_m\big((\delta_m^n)^{-1}(\sigma)\big)\notag.
    \end{align*}
This and $\sigma$-finiteness of $\mu_m$ prove the claim.
\end{rem}
Let $(X,\aa,\mu)=\LIM\psnmu$. Assume that $(X_k,\aa_k,\mu_k)$ is $\sigma$-finite for every $k\in\N$. Set $\frcn=\Delta_n L^2(\mu_n)$, $n\in\N$, and $\frc=\bigcup_{n\in\N}\frcn$. Members of $\frc$  are called {\em cylinder functions}. By $\frx$ we denote the of set all characteristic functions of sets from $(\aa)_\mu$, regarded as a linear subspace of $L^2(\mu)$, while $\frx_c$ stands for the intersection of $\frx$ and $\frc$.  Throughout what follows
$\frs$ denotes the linear span of $\frx_c$.
\begin{rem}
Suppose that $(X,\aa,\mu)=\ILIM\psnmu$, $(X,\aa,\mu)$. Clearly, characteristic functions of sets of finite measure are linearly dense in $L^2(\mu)$. By \cite[Approximation Theorem 1.3.11]{ash-dol}, this and condition $(\mathtt{P}_6)$ imply that $\frs$ is linearly dense in $L^2(\mu)$ as well. In particular, cylinder functions are dense in $L^2(\mu)$.
\end{rem}
Now, let $(X,\aa,\mu)=\LIM\psnmu$. Let $k,l\in\N$ be such that $k\Le l$. Suppose that $(X_k,\aa_k,\mu_k)$ is $\sigma$-finite,
\begin{align}\label{delta1}
\mu_l\circ(\delta_l^k)^{-1}\ll \mu_k \ \text{ and }\ \frac{\D\mu_l\circ(\delta_l^k)^{-1}}{\D\mu_k}\in L^{\infty}(\mu_k).
\end{align}
Analogously to operators $\Delta_n$, we may define bounded operators
\begin{align*}
\Delta_k^l:L^2(\mu_k)\ni f\mapsto f\circ\delta_l^k\in L^2(\mu_l).
\end{align*}
The projective system setting fits well together with inductive limits of $L^2$-spaces (see Lemma \ref{indlim} below). We will use this fact when implementing approximation procedure for a study of composition operators acting in $L^2$-spaces over measure spaces being isometric target spaces of projective systems.
\begin{lem}\label{indlim}
Let $(X,\aa,\mu)=\ILIM\psnmu.$ Then condition \eqref{delta1} is satisfied, operators $\Delta_k^l$ are isometries for all natural numbers $k\Le l$
and $L^2(\mu)$ is the inductive limit of $L^2(\mu_n)$.
\end{lem}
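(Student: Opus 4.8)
The plan is to reduce everything to a single fact: in the isometric setting the measure $\mu_k$ is the push-forward of $\mu$ along $\delta^k$, i.e.\ $\mu\circ(\delta^k)^{-1}=\mu_k$ on $\aa_k$ for every $k$. This is immediate from the hypotheses: since $\Delta_k$ is an isometry on $L^2(\mu_k)$, the equivalence established in the proof of Lemma \ref{lpisom} gives $\tfrac{\D\mu\circ(\delta^k)^{-1}}{\D\mu_k}=1$ a.e.\ $[\mu_k]$, and together with the absolute continuity part of \eqref{18.05.2011.01} this yields $\mu\circ(\delta^k)^{-1}(\sigma)=\int_\sigma 1\,\D\mu_k=\mu_k(\sigma)$ for all $\sigma\in\aa_k$. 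Lemma \ref{lpisom} also gives that $\mu$ is $\sigma$-finite, which I will use for $(\mathtt{I}_4)$.

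Next I would derive \eqref{delta1} and the isometry property of $\Delta_k^l$. Fix $k\Le l$. By $(\mathtt{P}_5)$, $\delta^k=\delta_l^k\circ\delta^l$, so $(\delta^k)^{-1}(\sigma)=(\delta^l)^{-1}\big((\delta_l^k)^{-1}(\sigma)\big)$ for every $\sigma\in\aa_k$ (with $(\delta_l^k)^{-1}(\sigma)\in\aa_l$ by $(\mathtt{P}_1)$), whence, applying the previous paragraph twice,
\begin{align*}
\mu_l\circ(\delta_l^k)^{-1}(\sigma)=\mu\circ(\delta^l)^{-1}\big((\delta_l^k)^{-1}(\sigma)\big)=\mu\circ(\delta^k)^{-1}(\sigma)=\mu_k(\sigma),\qquad\sigma\in\aa_k.
\end{align*}
Thus $\mu_l\circ(\delta_l^k)^{-1}=\mu_k$, so its Radon--Nikodym derivative with respect to $\mu_k$ equals the constant $1\in L^\infty(\mu_k)$ and \eqref{delta1} holds; the operators $\Delta_k^l$ are then well defined and bounded, and by the measure transport theorem $\|\Delta_k^lf\|^2=\int_{X_k}|f|^2\D\mu_l\circ(\delta_l^k)^{-1}=\int_{X_k}|f|^2\D\mu_k=\|f\|^2$, so each $\Delta_k^l$ is an isometry.

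Finally I would check that $L^2(\mu)=\ILIM L^2(\mu_n)$ with $\hh_n=L^2(\mu_n)$, $\Lambda_k^l=\Delta_k^l$, $\Lambda_k=\Delta_k$ (all isometries by the above). Conditions $(\mathtt{I}_1)$--$(\mathtt{I}_3)$ are formal: precomposing a function $f\in L^2(\mu_k)$ with the identities $(\mathtt{P}_3)$, $(\mathtt{P}_2)$, $(\mathtt{P}_5)$ gives $\Delta_k^k f=f$, $\Delta_l^m\Delta_k^l f=f\circ(\delta_l^k\circ\delta_m^l)=f\circ\delta_m^k=\Delta_k^m f$ for $k\Le l\Le m$, and $\Delta_l\Delta_k^l f=f\circ(\delta_l^k\circ\delta^l)=f\circ\delta^k=\Delta_k f$ for $k\Le l$. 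For $(\mathtt{I}_4)$, observe that by $(\mathtt{I}_3)$ the subspaces $\frcn=\Delta_n L^2(\mu_n)$ increase with $n$, so $\frc=\bigcup_n\frcn$ is a linear subspace of $L^2(\mu)$ containing $\frs$; since $\mu$ is $\sigma$-finite, the remark that cylinder functions are dense whenever $(X,\aa,\mu)=\ILIM\psnmu$ (which rests on $(\mathtt{P}_6)$ and the Approximation Theorem of \cite{ash-dol}) gives $\overline{\frc}=L^2(\mu)$. I do not expect a genuine obstacle here; the only point that needs care is the transfer in the second paragraph of the normalization $\tfrac{\D\mu\circ(\delta^k)^{-1}}{\D\mu_k}=1$ from the maps $\delta^k$ to the maps $\delta_l^k$ via the compatibility $(\mathtt{P}_5)$.
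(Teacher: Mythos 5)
Your proof is correct and takes essentially the same route as the paper, whose proof simply sets $\Lambda_k^l=\Delta_k^l$, $\Lambda_k=\Delta_k$ and declares conditions ($\mathtt{I}_1$)--($\mathtt{I}_4$) evident. You merely fill in the details left implicit there: deriving $\mu\circ(\delta^k)^{-1}=\mu_k$ from Lemma \ref{lpisom}, transferring this via ($\mathtt{P}_5$) to $\mu_l\circ(\delta_l^k)^{-1}=\mu_k$ so that \eqref{delta1} and the isometry of $\Delta_k^l$ follow, and invoking the density of cylinder functions (the remark resting on ($\mathtt{P}_6$) and the Approximation Theorem) for ($\mathtt{I}_4$).
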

\begin{proof}
Set $\Lambda_k^l=\Delta_k^l$ and $\Lambda_k=\Delta_k$ for $k,l\in\N$ such that $k\Le l$. It is evident then that conditions
($\mathtt{I}_1$)-($\mathtt{I}_4$) are satisfied.
\end{proof}
{\bf Caution.} From now on we tacitly assume that if $(X,\aa,\mu)=\ILIM\psnmu$, then $L^2(\mu)=\ILIM L^2(\mu_n)$ with respect to the maps $\Lambda_k^l$ and $\Lambda_k$ as in the proof of Lemma \ref{indlim}.
\section{Composition operators and inductive limits over projective systems}
\subsection{Dense definiteness and boundedness of $\LIM C_{A_n}$}
In this part of the paper we are aiming to supply some quite natural assumptions which would imply that the inductive limit operator $\LIM C_{A_n}$ of composition operators is densely defined or bounded. We begin by describing the domain of $\LIM C_{A_n}$.
\begin{lem}\label{domlim}
Let $(X,\aa,\mu)=\ILIM\psnmu$. Suppose that for every $l\in\N$, $A_l$ is a nonsingular $\aa_l$-measurable transformation of $X_l$ such that $\hsf^{A_l}<\infty$ a.e. $[\mu_l]$. Then $\dom(\LIM C_{A_n})$ consists of all $\Lambda_k f\in \dom_\infty$ such that for every $\varepsilon>0$ the following condition
\begin{align}\label{condit}
\|\esf_m\big(\Lambda_n^m ((\Lambda_k^n f)\circ A_n)\big)\circ A_m^{-1} - \Lambda_k^m f\|_{L^2(\hsf^{A_m}\D\mu_m)}\Le \varepsilon, \quad m\Ge n\Ge M.
\end{align}
is satisfied with sufficiently large $M\in \N$.
\end{lem}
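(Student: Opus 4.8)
The plan is to recast membership in $\dom(\LIM C_{A_n})$ as a Cauchy condition in $L^2(\mu)$ and then translate that condition into \eqref{condit}, using that each conditional expectation $\esf_m$ (taken on $(X_m,\aa_m,\mu_m)$ with respect to $A_m^{-1}(\aa_m)$) is an orthogonal projection of $L^2(\mu_m)$ onto the subspace of $A_m^{-1}(\aa_m)$-measurable functions. First I would fix $\Lambda_k f\in\dom_\infty$ and $M\Ge k$ with $\Lambda_k^m f\in\dom(C_{A_m})$ for every $m\Ge M$, and for $m\Ge n\Ge M$ put $v_m\okr\Lambda_m C_{A_m}\Lambda_k^m f\in L^2(\mu)$, $w_m\okr C_{A_m}\Lambda_k^m f=(\Lambda_k^m f)\circ A_m\in L^2(\mu_m)$ and $u_{n,m}\okr\Lambda_n^m\big((\Lambda_k^n f)\circ A_n\big)\in L^2(\mu_m)$. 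Since $\Lambda_m$ is an isometry and, by ($\mathtt{I}_2$) and ($\mathtt{I}_3$), $\Lambda_n=\Lambda_m\circ\Lambda_n^m$ and $\Lambda_k^m f=\Lambda_n^m\Lambda_k^n f$, one has $\|v_m-v_n\|_{L^2(\mu)}=\|w_m-u_{n,m}\|_{L^2(\mu_m)}$ for $m\Ge n\Ge M$; hence $\Lambda_k f\in\dom(\LIM C_{A_n})$ if and only if $\{v_m\}_{m\Ge M}$ is a Cauchy sequence, i.e.\ if and only if for each $\varepsilon>0$ there is a sufficiently large $M$ such that $\|w_m-u_{n,m}\|_{L^2(\mu_m)}\Le\varepsilon$ whenever $m\Ge n\Ge M$.

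Next I would identify the left-hand side of \eqref{condit}. Both $\esf_m(u_{n,m})\circ A_m^{-1}$ and $\Lambda_k^m f$ lie in $L^2(\hsf^{A_m}\D\mu_m)$ (for the first, use $\big(\esf_m(u_{n,m})\circ A_m^{-1}\big)\circ A_m=\esf_m(u_{n,m})$ a.e.\ $[\mu_m]$ from \eqref{fifi} together with $\esf_m(u_{n,m})\in L^2(\mu_m)$; for the second, that $\Lambda_k^m f\in\dom(C_{A_m})$), so \eqref{RN} applied to $A_m$ and to the function $\esf_m(u_{n,m})\circ A_m^{-1}-\Lambda_k^m f$ gives
\[
\big\|\esf_m(u_{n,m})\circ A_m^{-1}-\Lambda_k^m f\big\|_{L^2(\hsf^{A_m}\D\mu_m)}=\big\|\esf_m(u_{n,m})-w_m\big\|_{L^2(\mu_m)} .
\]
Thus \eqref{condit} says precisely that $\|w_m-\esf_m(u_{n,m})\|_{L^2(\mu_m)}\Le\varepsilon$ for $m\Ge n\Ge M$. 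Now $w_m$ is $A_m^{-1}(\aa_m)$-measurable, so $\esf_m w_m=w_m$, and splitting $u_{n,m}=\esf_m(u_{n,m})+\big(u_{n,m}-\esf_m(u_{n,m})\big)$ orthogonally yields the Pythagorean identity $\|w_m-u_{n,m}\|^2=\|w_m-\esf_m(u_{n,m})\|^2+\|u_{n,m}-\esf_m(u_{n,m})\|^2$ in $L^2(\mu_m)$. One implication is then immediate: if $\Lambda_k f\in\dom(\LIM C_{A_n})$, then contractivity of $\esf_m$ gives $\|w_m-\esf_m(u_{n,m})\|\Le\|w_m-u_{n,m}\|=\|v_m-v_n\|$, which is $\Le\varepsilon$ for large $m,n$, so \eqref{condit} holds.

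For the converse I would assume \eqref{condit}. As $\Lambda_n^m$, $\Lambda_n$, $\Lambda_m$ are isometries, one computes $\|u_{n,m}-\esf_m(u_{n,m})\|^2=\|u_{n,m}\|^2-\|\esf_m(u_{n,m})\|^2=\|v_n\|^2-\|\esf_m(u_{n,m})\|^2$ and $\|w_m\|=\|v_m\|$, while \eqref{condit} and the triangle inequality give $\|\esf_m(u_{n,m})\|\Ge\|v_m\|-\varepsilon$ and $\|v_m\|\Le\|\esf_m(u_{n,m})\|+\varepsilon\Le\|u_{n,m}\|+\varepsilon=\|v_n\|+\varepsilon$ for $m\Ge n\Ge M(\varepsilon)$. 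The inequality $\|v_m\|\Le\|v_n\|+\varepsilon$, valid for every $\varepsilon>0$, forces $\lim_{m\to\infty}\|v_m\|$ to exist and be finite; inserting the above bounds into the Pythagorean identity yields
\[
\|v_m-v_n\|^2\Le\varepsilon^2+\|v_n\|^2-\|v_m\|^2+2\varepsilon\|v_m\|,\qquad m\Ge n\Ge M(\varepsilon),
\]
so letting $m,n\to\infty$ and then $\varepsilon\to0$ shows $\{v_m\}$ is Cauchy, whence $\Lambda_k f\in\dom(\LIM C_{A_n})$.

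The step I expect to be the main obstacle is this converse direction: \eqref{condit} controls only the orthogonal projection $\esf_m(u_{n,m})$ of $u_{n,m}$ onto the range of $C_{A_m}$, not $u_{n,m}$ itself, so one must first squeeze convergence of the numbers $\|v_m\|$ out of \eqref{condit} before the Pythagorean identity can close the gap. A secondary point requiring care is the identification in the second paragraph --- checking that $\esf_m(u_{n,m})\circ A_m^{-1}-\Lambda_k^m f$ really belongs to $L^2(\hsf^{A_m}\D\mu_m)$ so that \eqref{RN} is applicable, and that the orthogonal projection $\esf_m$ fixes $w_m$.
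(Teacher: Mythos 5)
Your argument is correct, but it takes a genuinely different — and more careful — route than the paper. The paper's proof is a single chain of equalities: using that the $\Lambda$'s are isometries together with \eqref{RN} and \eqref{fifi}, it asserts that $\|\Lambda_m C_{A_m}\Lambda_k^m f-\Lambda_n C_{A_n}\Lambda_k^n f\|_{L^2(\mu)}^2$ is \emph{equal} to the square of the left-hand side of \eqref{condit}, and then reads the lemma off from the definition of $\LIM C_{A_n}$. In your notation that asserted identity is $\|u_{n,m}-w_m\|_{L^2(\mu_m)}=\|\esf_m(u_{n,m})-w_m\|_{L^2(\mu_m)}$, which by your own Pythagorean decomposition differs from being an identity by the term $\|u_{n,m}-\esf_m(u_{n,m})\|_{L^2(\mu_m)}^2$; it is an equality only when $u_{n,m}$ is $A_m^{-1}(\aa_m)$-measurable (for instance when $A_m$ is invertible, so that $A_m^{-1}(\aa_m)=\aa_m$), and in general only the inequality ``$\Ge$'' holds, so that computation alone gives the implication from membership in $\dom(\LIM C_{A_n})$ to \eqref{condit} but not the converse. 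Your treatment supplies exactly what is needed for the stated equivalence: the forward direction via $\esf_m w_m=w_m$ and contractivity of the projection $\esf_m$, and the converse by first squeezing convergence of the norms $\|v_m\|$ out of \eqref{condit} and then feeding it back into the Pythagorean identity to get the Cauchy property. What the paper's route buys, when its equality is available, is a two-line proof; what yours buys is a complete proof of the equivalence under no extra measurability (or invertibility) assumption on the symbols, at the cost of the extra limiting argument in the converse direction.
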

\begin{proof}
Since $(X,\aa,\mu)$ is an isometric inductive limit of $\{(X_n,\aa_n,\mu_n)\}_{n\in\N}$, by \eqref{RN} and \eqref{fifi}, we have
    \begin{align*}
    &\|\Lambda_m C_{A_m} \Lambda_k^m f-\Lambda_n C_{A_n}\Lambda_k^n f\|_{L^2(\mu)}^2
    =\int_{X_m}|\Lambda_n^m \big((\Lambda_k^n f)\circ A_n\big) -(\Lambda_k^m f)\circ A_m|^2\D\mu_m\\
    &=\int_{X_m}|\esf_m \big( \Lambda_n^m ((\Lambda_k^n f)\circ A_n)\big)\circ A_m^{-1} -\Lambda_k^m f|^2\hsf^{A_m}\D\mu_m, \quad m\Ge n\Ge k,
    \end{align*}
for every $f\in\dom(C_{A_k})$. Hence, the claim follows from the definition of $\LIM C_{A_n}$.
\end{proof}
\begin{rem}
Regarding Lemma \ref{domlim}, it is worth pointing out that it may happen that $\LIM C_{A_n}$ is densely defined in $L^2(\mu)$ but none of the $C_{A_l}$, $l\in\N$, is densely defined. For example, if $\hh=\hh_n=\ell^2(\N)$ and $\phi_n(x)=\min\{n,x\}$ for $n,x\in\N$, then $\dom(C_{\phi_n})=\lin\{e_1,\ldots,e_{n-1}\}$, where $\{e_n\}_{n=1}^\infty$ is a standard orthonormal basis of $\ell^2(\N)$. On the other hand, $\dom(\LIM C_{\phi_n})=\lin\{e_n\colon n\in \N\}$ which is dense in $\hh$. Consequently, in this particular situation the assumption $\hsf^{A_l}<\infty$ a.e. $[\mu_l]$ for all $l\in\N$ is not satisfied (see by \eqref{dense}).
\end{rem}
Characteristic functions of cylinder sets with finite measure are the most elementary functions which we expect to belong to the domain of $\LIM C_{A_n}$. The conditions (i) and (ii) of Proposition \ref{cauchy} below turns out to be essential for this to happen.
\begin{prop}\label{cauchy}
Let $(X,\aa,\mu)=\ILIM\psnmu$. Suppose that $A_n$, $n\in\N$, is a nonsingular $\aa_n$-measurable transformation of $X_n$ such that $\hsf^{A_n}<\infty$ a.e. $[\mu_n]$. Let $k\in\N$ and $\sigma\in(\aa_k)_{\mu_k}$. Then $\Delta_k \chi_{\sigma}\in\dom(\LIM C_{A_n})$ if and only if the following two conditions are satisfied
\begin{enumerate}
\item[(i)] there exists $M\Ge k$ such that $\chi_{(\delta_n^k)^{-1}(\sigma)} \hsf^{A_n}\in L^1(\mu_n)$ for all $n\Ge M$
\item[(ii)] for every $\varepsilon>0$ there exists $N\Ge k$ such that
    \begin{align*}
    \mu\Big(\big(\delta_n^k\circ A_n\circ \delta^n\big)^{-1}(\sigma) \triangle \big(\delta_m^k\circ A_m\circ \delta^m\big)^{-1}(\sigma)\Big)\Le\varepsilon, \quad n,m\Ge N.
    \end{align*}
\end{enumerate}
\end{prop}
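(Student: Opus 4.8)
The plan is to compute the approximating sequence $\{\Lambda_m C_{A_m}\Lambda_k^m\chi_\sigma\}_m$ explicitly and to read off both conditions from it. Since $\chi_\sigma$ is a cylinder characteristic function and all the maps involved — $\Lambda_k^m=\Delta_k^m$, $\Lambda_m=\Delta_m$, and $C_{A_m}$ — act by composition with point maps, this sequence will again consist of characteristic functions of cylinder sets, so its convergence in $L^2(\mu)$ is governed purely by symmetric differences, which is precisely what (ii) measures; meanwhile the requirement that $\Delta_k\chi_\sigma$ belong to $\dom_\infty$ will turn out to be exactly (i).

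First I would record, for $m\ge n\ge k$, using the identity $\chi_S\circ T=\chi_{T^{-1}(S)}$ together with $A_m^{-1}\big((\delta_m^k)^{-1}(\sigma)\big)=(\delta_m^k\circ A_m)^{-1}(\sigma)$ and $(\delta^m)^{-1}\big((\delta_m^k\circ A_m)^{-1}(\sigma)\big)=(\delta_m^k\circ A_m\circ\delta^m)^{-1}(\sigma)$, that $\Lambda_k^m\chi_\sigma=\chi_{(\delta_m^k)^{-1}(\sigma)}$, that $C_{A_m}\Lambda_k^m\chi_\sigma=\chi_{(\delta_m^k\circ A_m)^{-1}(\sigma)}$, and hence
\[
\Lambda_m C_{A_m}\Lambda_k^m\chi_\sigma=\chi_{\tau_m},\qquad \tau_m:=(\delta_m^k\circ A_m\circ\delta^m)^{-1}(\sigma)\in\aa.
\]
Next I would identify membership in $\dom_\infty$ with (i). By definition $\Delta_k\chi_\sigma\in\dom_\infty$ means there is $M\ge k$ with $\Lambda_k^m\chi_\sigma=\chi_{(\delta_m^k)^{-1}(\sigma)}\in\dom(C_{A_m})$ for all $m\ge M$; as $\mu_m\big((\delta_m^k)^{-1}(\sigma)\big)=\mu_k(\sigma)<\infty$ (the relation $\mu_m\circ(\delta_m^k)^{-1}=\mu_k$ on $\aa_k$ follows, exactly as in the proof of Lemma \ref{lpisom}, from $\Delta_k^m$ being an $L^2$-isometry), this function lies in $L^2(\mu_m)$ automatically, and by \eqref{RN} it lies in $\dom(C_{A_m})$ iff $\|C_{A_m}\Lambda_k^m\chi_\sigma\|_{L^2(\mu_m)}^2=\int_{X_m}\chi_{(\delta_m^k)^{-1}(\sigma)}\hsf^{A_m}\D\mu_m<\infty$, i.e.\ iff $\chi_{(\delta_m^k)^{-1}(\sigma)}\hsf^{A_m}\in L^1(\mu_m)$; so $\Delta_k\chi_\sigma\in\dom_\infty$ is exactly (i). Moreover, once (i) holds the same computation, together with the measure transport theorem for $A_m$ and the $L^1$-isometry of $\Delta_m$ (Lemma \ref{lpisom}), gives $\mu(\tau_m)=\mu_m\big(A_m^{-1}((\delta_m^k)^{-1}(\sigma))\big)=\int_{X_m}\chi_{(\delta_m^k)^{-1}(\sigma)}\hsf^{A_m}\D\mu_m<\infty$ for $m\ge M$, so the $\chi_{\tau_m}$ genuinely lie in $L^2(\mu)$ and the display above is legitimate.

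It then remains, assuming (i), to unwind the definition of $\LIM C_{A_n}$: $\Delta_k\chi_\sigma\in\dom(\LIM C_{A_n})$ iff $\lim_{m\to\infty}\Lambda_m C_{A_m}\Lambda_k^m\chi_\sigma=\lim_{m\to\infty}\chi_{\tau_m}$ exists in $L^2(\mu)$, which by completeness of $L^2(\mu)$ holds iff $\{\chi_{\tau_m}\}_{m\ge M}$ is Cauchy; and since $\|\chi_{\tau_m}-\chi_{\tau_n}\|_{L^2(\mu)}^2=\mu(\tau_m\triangle\tau_n)$, this is precisely (ii). I do not anticipate any genuine obstacle; the only points calling for care are that every cylinder set occurring above has finite $\mu$-measure — this is where (i), the isometry property (Lemma \ref{lpisom}), and the measure transport theorem are used — and that one need not know in advance that the $L^2$-limit of $\{\chi_{\tau_m}\}$ is again a characteristic function, the Cauchy criterion being enough for both implications. (Alternatively the equivalence can be deduced from Lemma \ref{domlim} specialized to $f=\chi_\sigma$, but the direct computation is shorter.)
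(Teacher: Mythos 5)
Your argument is correct, and its first half (the identification of condition (i) with membership of $\Delta_k\chi_\sigma$ in $\dom_\infty$ via the measure transport identity $\int|\Lambda_k^m\chi_\sigma\circ A_m|^2\D\mu_m=\int\chi_{(\delta_m^k)^{-1}(\sigma)}\hsf^{A_m}\D\mu_m$) is exactly the paper's first step. Where you diverge is in handling condition (ii): the paper routes the argument through Lemma \ref{domlim}, i.e.\ through the conditional-expectation description of $\dom(\LIM C_{A_n})$, and then uses \eqref{RN} and \eqref{fifi} to show that the weighted norm in condition \eqref{condit} equals the $\mu$-measure of the symmetric difference appearing in (ii); you instead bypass conditional expectations entirely, observing that each approximant $\Lambda_m C_{A_m}\Lambda_k^m\chi_\sigma$ is literally the characteristic function of the cylinder set $\tau_m=(\delta_m^k\circ A_m\circ\delta^m)^{-1}(\sigma)$, so that the Cauchy criterion in $L^2(\mu)$ reduces to $\|\chi_{\tau_m}-\chi_{\tau_n}\|^2=\mu(\tau_m\triangle\tau_n)$, which is (ii) verbatim. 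The two routes are close in substance -- Lemma \ref{domlim} is itself proved by computing norms of differences of approximants -- but yours is more elementary and self-contained for characteristic functions, and it makes explicit the finite-measure bookkeeping (via the isometry argument of Lemma \ref{lpisom} and the measure transport theorem) that the paper leaves implicit; the paper's route, on the other hand, reuses machinery (Lemma \ref{domlim}, $\esf(\cdot)$, \eqref{fifi}) that is needed anyway for general $f\in L^2(\mu_k)$, not just for indicators, so it places the proposition inside a uniform framework. The only representation-theoretic subtlety -- that $\Delta_k\chi_\sigma$ might enter $\dom_\infty$ through a different index $k'$ -- is harmless because the maps $\Lambda_m$ are injective, and both you and the paper treat it the same way.
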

\begin{proof}
Set $f=\Delta_k \chi_{\sigma}$. Clearly, $f\in L^2(\mu)$. Since we have
    \begin{align*}
    \int \big|\Delta_k^n\chi_{\sigma} \circ A_n\big|^2\D\mu_n
        &=
    \mu_n\Big(A_n^{-1}\big((\delta_n^k)^{-1}(\sigma)\big)\Big)\\
        &=
    \int \chi_{(\delta_n^k)^{-1}(\sigma)}\hsf^{A_n} \D\mu_n, \quad n\Ge k,
    \end{align*}
we see that (i) is equivalent to $\Delta_k^n\chi_{\sigma_k} \in\dom(C_{A_n})$ for any $n\Ge M$. On the other hand, by \eqref{RN} and \eqref{fifi}, we have
\begin{multline*}
    \|\esf_m\big(\Lambda_n^m ((\Lambda_k^n \chi_\sigma)\circ A_n)\big)\circ A_m^{-1} - \Lambda_k^m \chi_\sigma\|_{L^2(\hsf^{A_m}\D\mu_m)}\\
    =
    \mu\Big(\big(\delta_n^k\circ A_n\circ \delta^n\big)^{-1}(\sigma) \triangle \big(\delta_m^k\circ A_m\circ \delta^m\big)^{-1}(\sigma)\Big),
    \quad m\Ge n \Ge k.
\end{multline*}
Therefore, condition (ii) is equivalent to condition \eqref{condit}. These two facts, in view of Lemma \ref{domlim}, imply the claim.
\end{proof}
\begin{cor}\label{denslim}
Let $(X,\aa,\mu)=\ILIM\psnmu$. Suppose that $A_n$, $n\in\N$, is a nonsingular $\aa_n$-measurable transformation of $X_n$.  If conditions {\rm (i)} and {\rm (ii)} of Proposition \ref{cauchy} are satisfied for all $\sigma\in(\aa_k)_{\mu_k}$ and $k\in\N$, then $\frs\subseteq \dom(\LIM C_{A_n})$.
\end{cor}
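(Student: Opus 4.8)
The plan is to derive the corollary directly from Proposition~\ref{cauchy}, on the basis of two elementary facts. First, $\dom(\LIM C_{A_n})$ is a linear subspace of $L^2(\mu)$: it is the domain of the linear operator $\LIM C_{A_n}$, and should one wish to see this from the definition, one lifts the representatives of two given vectors of $\dom_\infty$ to a common index using $(\mathtt{I}_2)$, $(\mathtt{I}_3)$ and the injectivity of the isometries $\Lambda_l$, and then uses that each $\dom(C_{A_m})$ is a subspace and that $f\mapsto\Lambda_mC_{A_m}\Lambda_l^mf$ is linear (so the limit, where it exists, is linear too). Second, $\frs=\lin\frx_c$ by definition. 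Combining these, it suffices to prove $\frx_c\subseteq\dom(\LIM C_{A_n})$.

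The key step is then to recognize each member of $\frx_c=\frx\cap\frc$ as a function of the form handled by Proposition~\ref{cauchy}. So I would fix $\psi\in\frx_c$; by the definitions of $\frx$ and $\frc$ we have $\psi=\chi_E$ for some $E\in(\aa)_\mu$ and simultaneously $\psi=\Delta_kg$ for some $k\in\N$ and $g\in L^2(\mu_k)$. The only point needing a short verification is that $g$ can be taken to be a characteristic function: the set $\sigma_0=\{x\in X_k:g(x)\notin\{0,1\}\}$ lies in $\aa_k$, its preimage $(\delta^k)^{-1}(\sigma_0)$ is contained in the $\mu$-null set $\{g\circ\delta^k\notin\{0,1\}\}$, and since $\Delta_k$ is isometric on each $L^p(\mu_k)$ by Lemma~\ref{lpisom} one obtains $\mu_k(\sigma_0)=\mu((\delta^k)^{-1}(\sigma_0))=0$; hence $g=\chi_\sigma$ a.e.\ $[\mu_k]$ for $\sigma=\{g=1\}\in\aa_k$, and the same isometry gives $\mu_k(\sigma)=\mu((\delta^k)^{-1}(\sigma))=\mu(E)<\infty$, i.e.\ $\sigma\in(\aa_k)_{\mu_k}$ and $\psi=\Delta_k\chi_\sigma$ in $L^2(\mu)$.

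Finally, since conditions (i) and (ii) of Proposition~\ref{cauchy} are assumed to hold for all $k\in\N$ and all $\sigma\in(\aa_k)_{\mu_k}$, that proposition applies to the pair $(k,\sigma)$ produced above and yields $\psi=\Delta_k\chi_\sigma\in\dom(\LIM C_{A_n})$. As $\psi$ was an arbitrary element of $\frx_c$ and $\dom(\LIM C_{A_n})$ is a subspace, $\frs=\lin\frx_c\subseteq\dom(\LIM C_{A_n})$, which is the assertion.

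I do not anticipate any genuine difficulty: the corollary is essentially a reformulation of Proposition~\ref{cauchy}. The only step carrying any content is the identification of $\frx_c$ with $\{\Delta_k\chi_\sigma:k\in\N,\ \sigma\in(\aa_k)_{\mu_k}\}$, and even that reduces entirely to the $L^p$-isometry of the maps $\Delta_k$ furnished by Lemma~\ref{lpisom}; the rest is the linearity of the domain and a bookkeeping appeal to the previous proposition.
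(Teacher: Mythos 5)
Your argument is exactly the one the paper intends: the corollary is stated without a separate proof precisely because it follows from Proposition~\ref{cauchy} by linearity of $\dom(\LIM C_{A_n})$ together with the observation (via the isometry of $\Delta_k$, Lemma~\ref{lpisom}) that every element of $\frx_c$ is of the form $\Delta_k\chi_\sigma$ with $\sigma\in(\aa_k)_{\mu_k}$. Your write-up just makes these routine steps explicit, so it is correct and takes essentially the same route as the paper.
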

Clearly, one immediate consequence of the above is that, under assumptions of Corollary \ref{denslim}, the inductive limit $\LIM C_{A_n}$ is densely defined. The same happens if all the operators $C_{A_n}$, $n\in \N$,  are bounded and condition (ii) of Proposition \ref{cauchy} holds. Using similar arguments as in the proof of Proposition \ref{cauchy} we can obtain the following version of Corollary \ref{denslim}, which again yields dense definiteness of $\LIM C_{A_n}$.
\begin{cor}\label{denslim+}
Let $(X,\aa,\mu)=\ILIM\psnmu$ and let $A_n$, $n\in\N$, be a nonsingular $\aa_n$-measurable transformation of $X_n$. Assume there exists a sequence $\{Z_n\}_{n\in\N}$ of sets such that $Z_n\in(\aa_n)_{\mu_n}$, $(\delta^n)^{-1}(Z_n)\nearrow X$ as $n\to\infty$ and
\begin{align*}
\big\{\chi_{(\delta^k)^{-1}(Z_k)}\Delta_n\hsf^{A_n}\colon k, n\in\N\big\}\subseteq L^1(\mu).
\end{align*}
Suppose that for every $\sigma\in(\aa_k)_{\mu_k}$, $k\in\N$, and every $\varepsilon>0$ there exists $N\Ge k$ such that
    \begin{align}\label{densetilde}
    \mu\Big(\big(\delta_n^k\circ A_n\circ \delta^n\big)^{-1}(\sigma\cap Z_k) \triangle \big(\delta_m^k\circ A_m\circ \delta^m\big)^{-1}(\sigma\cap Z_k)\Big)\Le\varepsilon, \quad n,m\Ge N.
    \end{align}
Then $\tilde\frs\subseteq \dom(\LIM C_{A_n})$, where $\tilde\frs=\{\chi_{(\delta^k)^{-1}(Z_k)} f\colon f\in \frs, k\in\N\}$.
\end{cor}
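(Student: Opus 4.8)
The plan is to follow the pattern of the proof of Corollary \ref{denslim}, reducing the statement to Proposition \ref{cauchy} after carrying the data $Z_k$ and $\sigma$ to a common level of the projective system. Since $\LIM C_{A_n}$ is linear, $\dom(\LIM C_{A_n})$ is a linear subspace of $L^2(\mu)$, and every element of $\frs$ is a finite linear combination of functions $\Delta_l\chi_\sigma$ with $l\in\N$, $\sigma\in(\aa_l)_{\mu_l}$; hence a general element of $\tilde\frs$ is a finite linear combination of functions $\chi_{(\delta^k)^{-1}(Z_k)}\,\Delta_l\chi_\sigma$, and it suffices to show each such function lies in $\dom(\LIM C_{A_n})$. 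Fix $k,l\in\N$ and $\sigma\in(\aa_l)_{\mu_l}$, and set $j=\max\{k,l\}$. Applying $(\mathtt{P}_5)$ to both factors we get
\[
\chi_{(\delta^k)^{-1}(Z_k)}\,\Delta_l\chi_\sigma
=\chi_{(\delta^k)^{-1}(Z_k)\cap(\delta^l)^{-1}(\sigma)}
=\Delta_j\chi_\tau,\qquad
\tau:=(\delta_j^k)^{-1}(Z_k)\cap(\delta_j^l)^{-1}(\sigma)\in\aa_j,
\]
so the whole problem is reduced to checking that $\Delta_j\chi_\tau\in\dom(\LIM C_{A_n})$, which I intend to do via Proposition \ref{cauchy} with the roles of $k$ and $\sigma$ there played by $j$ and $\tau$.

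Two elementary facts about $\tau$ will be used. First, $(\delta_j^k)^{-1}(Z_k)\subseteq Z_j$: since $(\delta^n)^{-1}(Z_n)\nearrow X$ we have $(\delta^k)^{-1}(Z_k)\subseteq(\delta^j)^{-1}(Z_j)$, i.e. $(\delta^j)^{-1}\big((\delta_j^k)^{-1}(Z_k)\big)\subseteq(\delta^j)^{-1}(Z_j)$ by $(\mathtt{P}_5)$, and surjectivity of $\delta^j$ upgrades this to $(\delta_j^k)^{-1}(Z_k)\subseteq Z_j$; in particular $\tau\subseteq Z_j$. Second, $\mu_j(\tau)<\infty$: by Lemma \ref{lpisom} (or the remark following it) $\mu_j\big((\delta_j^l)^{-1}(\sigma)\big)=\mu_l(\sigma)<\infty$, and $\tau\subseteq(\delta_j^l)^{-1}(\sigma)$. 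The same monotonicity-plus-surjectivity argument also gives $(\delta_n^j)^{-1}(Z_j)\subseteq Z_n$, hence $(\delta_n^j)^{-1}(\tau)\subseteq Z_n$, for every $n\Ge j$.

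Now I verify conditions (i) and (ii) of Proposition \ref{cauchy} for $\Delta_j\chi_\tau$. For (i), note that for $n\Ge j$ one has $\chi_{(\delta_n^j)^{-1}(\tau)}\hsf^{A_n}\Le\chi_{Z_n}\hsf^{A_n}$ pointwise by the last observation, and $\chi_{Z_n}\hsf^{A_n}\in L^1(\mu_n)$ because $\Delta_n$ is an $L^1$-isometry (Lemma \ref{lpisom}), so that
\[
\int_{X_n}\chi_{Z_n}\hsf^{A_n}\D\mu_n
=\int_X\big(\chi_{Z_n}\hsf^{A_n}\big)\circ\delta^n\,\D\mu
=\int_X\chi_{(\delta^n)^{-1}(Z_n)}\,\Delta_n\hsf^{A_n}\,\D\mu<\infty
\]
by the standing $L^1$-hypothesis of the corollary applied with $k=n$; thus (i) holds with $M=j$. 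For (ii), since $\tau\subseteq Z_j$ we have $\tau=\tau\cap Z_j$, so \eqref{densetilde} applied with $k=j$ and the finite-measure set $\tau\in(\aa_j)_{\mu_j}$ in place of $\sigma$ yields precisely the Cauchy-type estimate demanded by condition (ii) of Proposition \ref{cauchy}. Consequently $\Delta_j\chi_\tau\in\dom(\LIM C_{A_n})$, hence $\chi_{(\delta^k)^{-1}(Z_k)}\Delta_l\chi_\sigma\in\dom(\LIM C_{A_n})$, and by linearity $\tilde\frs\subseteq\dom(\LIM C_{A_n})$.

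The genuinely delicate point is the bookkeeping with the bonding maps: lifting $Z_k$ and $\sigma$ to a common level $j$ via $(\mathtt{P}_5)$, deriving the monotonicity $(\delta_j^k)^{-1}(Z_k)\subseteq Z_j$ (and its iterate $(\delta_n^j)^{-1}(Z_j)\subseteq Z_n$) from $(\delta^n)^{-1}(Z_n)\nearrow X$ together with surjectivity of the $\delta^n$'s, and noticing that the resulting $\tau$ already satisfies $\tau\subseteq Z_j$ so that \eqref{densetilde} applies verbatim with $\sigma$ replaced by $\tau$. Everything else is a routine transcription of the proof of Proposition \ref{cauchy}; one only has to keep in mind that Proposition \ref{cauchy} is being invoked, so its nonsingularity and a.e.\ finiteness requirements on the $A_n$ must be in force in the situation considered.
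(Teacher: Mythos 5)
Your argument is correct and is essentially the route the paper intends: the paper gives no written proof of Corollary \ref{denslim+}, only the remark that it follows by arguments similar to those in the proof of Proposition \ref{cauchy}, and your reduction --- lifting $Z_k$ and $\sigma$ to a common level $j$ via ($\mathtt{P}_5$), deriving $(\delta_j^k)^{-1}(Z_k)\subseteq Z_j$ from monotonicity plus surjectivity, noting $\tau=\tau\cap Z_j$ so that \eqref{densetilde} at level $j$ is exactly condition (ii) of Proposition \ref{cauchy}, and dominating $\chi_{(\delta_n^j)^{-1}(\tau)}\hsf^{A_n}$ to get condition (i) from the $L^1$ hypothesis --- is precisely the bookkeeping that remark leaves to the reader. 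The only loose end is your closing sentence: the a.e.\ finiteness of $\hsf^{A_n}$ required by Proposition \ref{cauchy} is not an extra assumption that ``must be in force''; it follows from the corollary's hypotheses and you should say so. Indeed, if $\mu_n(\{\hsf^{A_n}=\infty\})>0$, choose $E\subseteq\{\hsf^{A_n}=\infty\}$ with $0<\mu_n(E)<\infty$; since $\Delta_n$ is isometric, $\mu\big((\delta^n)^{-1}(E)\big)=\mu_n(E)>0$, and since $(\delta^k)^{-1}(Z_k)\nearrow X$ there is $k$ with $\mu\big((\delta^k)^{-1}(Z_k)\cap(\delta^n)^{-1}(E)\big)>0$, so $\chi_{(\delta^k)^{-1}(Z_k)}\Delta_n\hsf^{A_n}$ is infinite on a set of positive $\mu$-measure, contradicting the hypothesis that it lies in $L^1(\mu)$. (Alternatively, for characteristic functions one can bypass the finiteness assumption entirely by running the symmetric-difference computation of Proposition \ref{cauchy} directly, since no conditional expectation is needed there.) With that one line added, your proof is complete.
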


The questions of boundedness of $\LIM C_{A_n}$ can be answered in the following way.
\begin{prop}\label{cauchy'}
Let $(X,\aa,\mu)=\ILIM\psnmu$. Suppose that $A_n$ is a nonsingular $\aa_n$-measurable transformation of $X_n$ for every $n\in\N$. If the following conditions are satisfied
\begin{enumerate}
\item[(i)] for all $k\in\N$, $\hsf^{A_k}\in L^\infty (\mu_k)$ $($or, equivalently, $C_{A_k}\in\bb(L^2(\mu_k))$$)$,
\item[(ii)] for every $k\in\N$ and $\sigma\in(\aa_k)_{\mu_k}$, the condition {\rm (ii)} of Proposition \ref{cauchy} holds,
\item[(iii)] for all $k\in\N$ there exist $C>0$ and $N\in\N$ such that
    \begin{align*}
    \mu_m\Big(\big(A_k\circ\delta_m^k\big)^{-1}(\sigma) \triangle \big(\delta_m^k\circ A_m\big)^{-1}(\sigma)\Big) \Le C\mu_k(\sigma),
    \quad \sigma\in(\aa_k)_{\mu_k}, m\Ge N,
    \end{align*}
\end{enumerate}
then $\frc\subseteq \dom(\LIM C_{A_n})$ and for every $k\in\N$, $(\LIM C_{A_n})|_{\frck}$ is bounded.
\end{prop}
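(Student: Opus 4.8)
The plan is to deduce the proposition from Lemma~\ref{indbound}(1) applied to the bounded operators $C_{A_n}\in\bb(L^2(\mu_n))$, so I would establish the two hypotheses of that lemma: that $\LIM C_{A_n}$ is densely defined, and that $\sup_{n\in\N}\|C_{A_n}|_{\Lambda_l^n\hh_l}\|<\infty$ for every $l\in\N$. Throughout, $\Lambda_k^l=\Delta_k^l$ and $\Lambda_k=\Delta_k$ are the maps fixed after Lemma~\ref{indlim}, each $\Delta_k^l$ is an isometry (Lemma~\ref{indlim}), and $\bigcup_{k\in\N}\Lambda_k\hh_k=\bigcup_{k\in\N}\Delta_kL^2(\mu_k)=\frc$.

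For dense definiteness I would appeal to Corollary~\ref{denslim}. Hypothesis~(i) yields $\hsf^{A_n}<\infty$ a.e.\ $[\mu_n]$, so Proposition~\ref{cauchy} is applicable; its condition~(ii) is precisely our hypothesis~(ii), and its condition~(i) holds with $M=k$, because for $\sigma\in(\aa_k)_{\mu_k}$ the isometry $\Delta_k^n$ gives $\mu_n\big((\delta_n^k)^{-1}(\sigma)\big)=\mu_k(\sigma)<\infty$, whence $\int_{X_n}\chi_{(\delta_n^k)^{-1}(\sigma)}\hsf^{A_n}\D\mu_n\Le\|\hsf^{A_n}\|_{L^\infty(\mu_n)}\mu_k(\sigma)<\infty$. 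Corollary~\ref{denslim} then yields $\frs\subseteq\dom(\LIM C_{A_n})$, and as $\frs$ is linearly dense in $L^2(\mu)$, the operator $\LIM C_{A_n}$ is densely defined.

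The core of the argument, and the step I expect to require the most care, is the uniform bound, where hypothesis~(iii) enters. Fix $l\in\N$. Since $C_{A_n}\Delta_l^n$ is bounded and linear, and every $f\in L^2(\mu_l)$ ($\mu_l$ being $\sigma$-finite) is an $L^2$-limit of finite linear combinations $\sum_jc_j\chi_{\sigma_j}$ with pairwise disjoint $\sigma_j\in(\aa_l)_{\mu_l}$ — whose images under $C_{A_n}\Delta_l^n$ have pairwise disjoint supports — it suffices to estimate $\|C_{A_n}\Delta_l^n\chi_\sigma\|_{L^2(\mu_n)}$ for $\sigma\in(\aa_l)_{\mu_l}$. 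Writing $E_n=(\delta_n^l\circ A_n)^{-1}(\sigma)$ and $F_n=(A_l\circ\delta_n^l)^{-1}(\sigma)=(\delta_n^l)^{-1}(A_l^{-1}(\sigma))$, one obtains from $\Delta_l^n\chi_\sigma=\chi_{(\delta_n^l)^{-1}(\sigma)}$ and boundedness of $C_{A_n}$ that $\|C_{A_n}\Delta_l^n\chi_\sigma\|_{L^2(\mu_n)}^2=\mu_n(E_n)\Le\mu_n(F_n)+\mu_n(E_n\triangle F_n)$. By hypothesis~(iii) with $k=l$, $m=n$ the term $\mu_n(E_n\triangle F_n)$ is $\Le C\mu_l(\sigma)$ for all $n\Ge N$; and since $\Delta_l^n$ is an isometry and $A_l^{-1}(\sigma)\in(\aa_l)_{\mu_l}$ — the latter because $\mu_l\big(A_l^{-1}(\sigma)\big)=\int_{X_l}\chi_\sigma\hsf^{A_l}\D\mu_l\Le\|\hsf^{A_l}\|_{L^\infty(\mu_l)}\mu_l(\sigma)$ — we get $\mu_n(F_n)=\mu_l\big(A_l^{-1}(\sigma)\big)\Le\|\hsf^{A_l}\|_{L^\infty(\mu_l)}\mu_l(\sigma)$. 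Hence $\|C_{A_n}|_{\Lambda_l^n\hh_l}\|^2\Le\|\hsf^{A_l}\|_{L^\infty(\mu_l)}+C$ for $n\Ge N$, while for the finitely many remaining indices $\|C_{A_n}|_{\Lambda_l^n\hh_l}\|\Le\|C_{A_n}\|<\infty$; therefore $\sup_{n\in\N}\|C_{A_n}|_{\Lambda_l^n\hh_l}\|<\infty$.

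With both hypotheses in place, Lemma~\ref{indbound}(1) gives $\frc=\bigcup_{n\in\N}\Lambda_n\hh_n\subseteq\dom(\LIM C_{A_n})$ and boundedness of $(\LIM C_{A_n})|_{\Lambda_k\hh_k}=(\LIM C_{A_n})|_{\frck}$ for every $k\in\N$, which is exactly the assertion. Beyond the estimate above, the only things needing attention are the bookkeeping of the three indices appearing in hypothesis~(iii) and the verification that $A_l^{-1}(\sigma)$ stays of finite $\mu_l$-measure; both are routine once hypothesis~(iii) is in hand, and the remaining ingredients (Lemmas~\ref{lpisom}, \ref{indlim}, \ref{indbound} and Corollary~\ref{denslim}) are already available.
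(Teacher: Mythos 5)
Your proof is correct and takes essentially the same route as the paper: dense definiteness via Proposition \ref{cauchy} (whose conditions (i) and (ii) follow from the hypotheses), a uniform bound on $\|C_{A_n}|_{\Lambda_k^n L^2(\mu_k)}\|$ obtained by estimating on characteristic functions of cylinder sets of finite measure using hypothesis (iii) together with the boundedness of $C_{A_k}$ and then extending by an approximation argument, and finally Lemma \ref{indbound}\,(1). The only deviations are cosmetic — measure subadditivity in place of the $L^2$ triangle inequality, a slightly different constant, and disjoint-support simple functions plus continuity of $C_{A_n}\Delta_l^n$ in place of the paper's linear density of indicators in $\Lambda_k^n\big(L^1(\mu_k)\big)$.
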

\begin{proof}
Clearly, (i) and (ii) imply conditions (i) and (ii) of Proposition \ref{cauchy} an thus $\frs\subseteq\dom(\LIM C_{A_n})$. Now, we show that for a fixed $k\in\N$ we have
    \begin{align}\label{21.08.07.3}
    \sup_{n\in\N} \|C_{A_n}|_{\Lambda_k^n (L^2(\mu_k))}\|<\infty.
    \end{align}
By (iii) there exist $C>0$ and $N\in\N$ such that
    \begin{align*}
    \|\Lambda_mC_{A_m}\Lambda_k^m\chi_{\sigma}-\Lambda_kC_{A_k}\chi_{\sigma}\|^2
    &=\mu_m\big((A_k\circ\delta_m^k)^{-1}(\sigma)\triangle(\delta_m^k\circ A_m)^{-1}(\sigma)\big)\\ \notag
    &\Le C\mu_k(\sigma)
    =C\mu_N((\Lambda_N^k)^{-1}(\sigma))
    \end{align*}
holds for all $m\Ge N$ and $\sigma\in(\aa_k)_{\mu_k}$. This together with ($\mathtt{P}_5$), the fact that all $\Lambda_l$'s are isometries and \eqref{RN} implies\allowdisplaybreaks
    \begin{align*}
    \int\chi_{(\delta_n^N)^{-1}(\omega)} \hsf^{A_n}\D\mu_n
    &=\|C_{A_n}\Lambda_N^n\chi_{\omega}\|^2_{L^2(\mu_n)}\\
    &\Le\Big(\|C_{A_n}\Lambda_N^n\chi_{\omega}-\Lambda_k^n C_{A_k}\chi_\sigma\|_{L^2(\mu_n)}+\|\Lambda_k^n C_{A_k}\chi_\sigma\|_{L^2(\mu_n)}\Big)^2\\
    &\Le \Big(\sqrt{C}\,\|\chi_\omega\|_{L^2(\mu_N)}+\| C_{A_k}\|\|\chi_{\omega}\|_{L^2(\mu_N)}\Big)^2\notag\\
    &=\Big(\sqrt{C}+\|C_{A_k}\|\Big)^2\cdot \int \chi_\omega \D\mu_N\notag\\
    &=\Big(\sqrt{C}+\|C_{A_k}\|\Big)^2\cdot \int \chi_{(\delta_n^N)^{-1}(\omega)} \D\mu_n\notag.
    \end{align*}
for every $n\Ge N$ and $\omega=(\delta_N^k)^{-1}(\sigma)$, with $\sigma\in (\aa_k)_{\mu_k}$. Now, applying standard approximation argument and the fact that the family $\{\chi_{(\delta_n^k)^{-1}(\sigma)}\colon \sigma\in(\aa_k)_{\mu_k}\}$ is linearly dense in $\Lambda_k^n\big(L^1(\mu_k)\big)$ we obtain
    \begin{align}\label{21.08.07.2}
    \int |g|^2 \hsf^{A_n}\D\mu_n \Le \Big(\sqrt{C}+\|C_{A_k}\|\Big)^2\cdot \|g\|^2
    \end{align}
for every $n\Ge N$ and every $g\in \Lambda_k^n(L^2(\mu_k))$. This, if combined with \eqref{RN}, yields
    \begin{align*}
    \|C_{A_n}g\|^2 \Le \Big(\sqrt{C}+ \|C_{A_k}\|\Big)^2\|g\|^2,\quad n\Ge N,\ g\in \Lambda_k^n(L^2(\mu_k)).
    \end{align*}
This yields \eqref{21.08.07.3}. Employing Lemma \ref{indbound}\,(1) we conclude the proof.
\end{proof}
\begin{prop}\label{cauchy''}
Let $(X,\aa,\mu)=\ILIM\psnmu$. Suppose that $A_n$ is a nonsingular $\aa_n$-measurable transformation of $X_n$ for every $n\in\N$. If the following conditions are satisfied
\begin{enumerate}
\item[(i)] $\sup_{k\in \N}\|\hsf^{A_k}\|_{L^\infty(\mu_k)}<\infty$,
\item[(ii)] for every $k\in\N$ and $\sigma\in(\aa_k)_{\mu_k}$, the condition $($ii$)$ of Proposition \ref{cauchy} holds.
\end{enumerate}
then $\LIM C_{A_n}$ is closable and $\overline{\LIM C_{A_n}}\in \bb(L^2(\mu))$.
\end{prop}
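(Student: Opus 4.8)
The plan is to deduce the statement from Lemma~\ref{indbound}(2) once dense definiteness of $\LIM C_{A_n}$ is in hand. First I would record that hypothesis (i), via \eqref{norm}, makes each $C_{A_k}$ a member of $\bb(L^2(\mu_k))$ with $\|C_{A_k}\|=\|\hsf^{A_k}\|_{L^\infty(\mu_k)}^{1/2}$, so that $\sup_{k\in\N}\|C_{A_k}\|<\infty$; moreover $\hsf^{A_k}<\infty$ a.e.\ $[\mu_k]$, so the standing assumptions of Proposition~\ref{cauchy} and Corollary~\ref{denslim} are satisfied. Recall also that, by the Caution following Lemma~\ref{indlim}, $L^2(\mu)=\ILIM L^2(\mu_n)$ with $\Lambda_k^l=\Delta_k^l$ and $\Lambda_k=\Delta_k$. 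Thus the whole burden is to check that $\LIM C_{A_n}$ is densely defined.

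For this I would invoke Corollary~\ref{denslim}, which requires conditions (i) and (ii) of Proposition~\ref{cauchy} to hold for every $k\in\N$ and every $\sigma\in(\aa_k)_{\mu_k}$. Condition (ii) there is precisely our hypothesis (ii). Condition (i) there --- namely $\chi_{(\delta_n^k)^{-1}(\sigma)}\,\hsf^{A_n}\in L^1(\mu_n)$ for all sufficiently large $n$ --- comes for free: by Lemma~\ref{indlim} the connecting operators $\Delta_k^n$ are $L^2$-isometries, so, exactly as in the proof of Lemma~\ref{lpisom}, $\tfrac{\D\mu_n\circ(\delta_n^k)^{-1}}{\D\mu_k}=1$ a.e.\ $[\mu_k]$ and hence $\mu_n\big((\delta_n^k)^{-1}(\sigma)\big)=\mu_k(\sigma)<\infty$ for $n\Ge k$; since $\hsf^{A_n}\in L^\infty(\mu_n)$ by hypothesis (i), the product $\chi_{(\delta_n^k)^{-1}(\sigma)}\hsf^{A_n}$ lies in $L^1(\mu_n)$ for every $n\Ge k$, so (i) of Proposition~\ref{cauchy} holds with $M=k$. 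Corollary~\ref{denslim} then yields $\frs\subseteq\dom(\LIM C_{A_n})$, and since $\frs$ is linearly dense in $L^2(\mu)$, the operator $\LIM C_{A_n}$ is densely defined.

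It remains to apply Lemma~\ref{indbound}(2) with $\hh_n=L^2(\mu_n)$, $\hh=L^2(\mu)$ and $C_n=C_{A_n}$: the hypotheses $C_n\in\bb(\hh_n)$, $\sup_{n\in\N}\|C_n\|<\infty$ and dense definiteness of $\LIM C_n$ have all been verified, so $\LIM C_{A_n}$ is closable and $\overline{\LIM C_{A_n}}\in\bb(L^2(\mu))$, as asserted. I do not expect a genuine obstacle; the only step calling for a little care is the ``free'' verification of condition (i) of Proposition~\ref{cauchy}, which hinges on the fact that in an isometric target space the bonding maps $\delta_n^k$ push $\mu_n$ forward to $\mu_k$ on sets of finite measure.
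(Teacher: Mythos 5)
Your proposal is correct and follows essentially the same route as the paper: boundedness of each $C_{A_k}$ from hypothesis (i) via \eqref{norm}, dense definiteness of $\LIM C_{A_n}$ through Proposition \ref{cauchy} (equivalently Corollary \ref{denslim}) combined with hypothesis (ii), and then Lemma \ref{indbound}(2) with $\sup_{k\in\N}\|C_{A_k}\|<\infty$. The only difference is that you spell out the verification of condition (i) of Proposition \ref{cauchy} using the isometry of the maps $\Delta_k^n$, a detail the paper leaves implicit since boundedness of $C_{A_n}$ makes it automatic.
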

\begin{proof}
By (i) and \eqref{norm} we see that for every $k\in\N$, $C_{A_k}\in\bb(L^2(\mu_k))$. Hence, by Proposition \ref{cauchy}, $\frs\subseteq\dom(\LIM C_{A_n})$ and thus $\LIM C_{A_n}$ is densely defined. Now, the claim follows from Lemma \ref{indbound}\,(2) and the fact that $\sup_{k\in\N}\|C_{A_n}\|<\infty$ (see \eqref{norm}).
\end{proof}
\subsection{Well-definiteness and boundedness of $C_A$}
Now we address the question of when the composition operator $C_A$ acting in an inductive limit of $L^2$-spaces is well-defined and bounded. We do it by relating $C_A$ to an inductive limit $\LIM C_{A_n}$.

We begin by recalling well-known criteria for $*$-weak compactness of a family $\ff$ contained in $L^p(\mu)$, $1\Le p<\infty$; the case $p=1$ follows directly from the compactness criterion of Dunford-Pettis theorem (cf. \cite[Chapter II, Theorem T23]{mey}) combined with the de la Vall\'e-Poussin's theorem (cf. \cite[Lemma 6.5.6]{ash-dol}) and the case $p>1$ is a consequence of the Banach-Alaoglu theorem.
\begin{lem}\label{dpvp}
Let $\ff\subseteq L^p(\mu)$ be countable and $1\Le p<\infty$. If one of the following conditions is satisfied$:$
\begin{enumerate}
\item[(i)] $p=1$, $\mu$ is a finite measure and $\frm(\ff)\neq\varnothing$,
\item[(ii)] $p>1$ and $\ff$ is uniformly bounded in $L^p(\mu)$,
\end{enumerate}
then $\ff$ is $*$-weakly compact.
\end{lem}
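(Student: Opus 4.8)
The plan is to deduce the lemma from classical results, handling the two cases separately; in each case the argument is a short assembly of cited theorems. Consider first case (ii), where $1<p<\infty$. I would use the duality $L^p(\mu)=\big(L^q(\mu)\big)^*$ with conjugate exponent $q=p/(p-1)$ (this identification holds for an arbitrary measure space because $1<q<\infty$), so that the $*$-weak topology on $L^p(\mu)$ is $\sigma\big(L^p(\mu),L^q(\mu)\big)$. Putting $R=\sup_{f\in\ff}\|f\|_{L^p(\mu)}<\infty$, the ball $\{g\in L^p(\mu)\colon\|g\|_{L^p(\mu)}\Le R\}$ is $*$-weakly compact by the Banach--Alaoglu theorem, and $\ff$ (more precisely, its $*$-weak closure) is a closed subset of that ball, hence $*$-weakly compact. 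Countability of $\ff$ is not needed here; it only becomes relevant if one wishes to extract $*$-weakly convergent subsequences, in which case one may add that the ball is $*$-weakly metrizable when $L^q(\mu)$ is separable, or invoke the Eberlein--\v{S}mulian theorem.

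For case (i), with $p=1$ and $\mu$ finite, I would fix $G\in\frm(\ff)$ (nonempty by hypothesis): $G$ is increasing and convex, $G(t)/t\to\infty$ as $t\to\infty$, and $c:=\sup_{f\in\ff}\int G(|f|)\D\mu<\infty$. The first step is to pass from this to uniform integrability of $\ff$. Superlinearity of $G$ gives, for each $M>0$, a constant $b_M$ with $t\Le b_M+M^{-1}G(t)$ for all $t\Ge 0$, whence $\sup_{f\in\ff}\int_{\{|f|\Ge b_M\}}|f|\D\mu\Le M^{-1}c$ and $\sup_{f\in\ff}\int|f|\D\mu\Le b_1\,\mu(X)+c<\infty$; this is exactly what the de la Vall\'ee--Poussin theorem \cite[Lemma 6.5.6]{ash-dol} provides, so I would cite it rather than reprove it. Since $\mu$ is finite, the Dunford--Pettis compactness criterion \cite[Chapter II, Theorem T23]{mey} then yields that $\ff$ is relatively weakly compact in $L^1(\mu)$. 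Identifying $L^1(\mu)$ with its canonical image in $\big(L^\infty(\mu)\big)^*$, on which the $*$-weak topology restricts to $\sigma\big(L^1(\mu),L^\infty(\mu)\big)$ (the weak topology of $L^1(\mu)$), this is the claimed $*$-weak compactness; and, as in case (ii), countability of $\ff$ together with Eberlein--\v{S}mulian lets one phrase it through subsequences.

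I do not expect a genuine obstacle: both halves reduce to citing standard theorems. The only points that call for a little care are (a) matching the hypotheses of the invoked de la Vall\'ee--Poussin and Dunford--Pettis statements to the present formulation --- finiteness of $\mu$ and the exact normalisation of $G$ built into the definition of $\frm(\ff)$ --- and (b) fixing the intended reading of ``$*$-weakly compact'': whether one asks that the $*$-weak closure of $\ff$ be compact, or, as is presumably wanted for later applications, that every sequence drawn from the countable set $\ff$ admit a $*$-weakly convergent subsequence. The elementary identification of the weak and $*$-weak topologies in the $L^1$ case, via the embedding $L^1(\mu)\hookrightarrow\big(L^\infty(\mu)\big)^*$, is the one place where a word of justification is owed.
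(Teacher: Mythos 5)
Your argument is correct and coincides with the paper's own justification: the paper gives no separate proof of this lemma, merely noting that case (i) follows from the de la Vall\'ee-Poussin theorem \cite[Lemma 6.5.6]{ash-dol} combined with the Dunford--Pettis criterion \cite[Chapter II, Theorem T23]{mey}, and case (ii) from the Banach--Alaoglu theorem, which is exactly the route you take (with the details of uniform integrability and the identification of the weak and $*$-weak topologies filled in).
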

Now, we show that absolute continuity of the measures $\mu_n$ and $\nu_n$ transfers onto theirs target measures $\mu$ and $\nu$.
\begin{lem}\label{absgen1}
Let $(X,\aa,\mu)=\ILIM\psnmu$ and $(X,\aa,\nu)=\LIM\psnnu$. Suppose that $\nu_n\ll\mu_n$ for all $n\in\N$. If there exists a sequence $\{Y_n\}_{n\in\N}$ of cylinder sets such that $Y_n\nearrow X$ as $n\to\infty$, $\big\{\chi_{Y_k}\Delta_n(\D\nu_n/\D\mu_n)\colon n\in\N\big\}\subseteq L^1(\mu)$ for every $k\in\N$ and $\frm\Big(\big\{\chi_{Y_k}\Delta_n(\D\nu_n/\D\mu_n)\colon n\in\N\big\}\Big)\neq\varnothing$ for every $k\in \N$, then $\nu\ll\mu$.
\end{lem}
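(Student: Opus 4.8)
The plan is to show that any $\sigma\in\aa$ with $\mu(\sigma)=0$ satisfies $\nu(\sigma)=0$. Since $\nu$ is $\sigma$-finite (it is a target measure of a projective system, though more importantly we may reduce to the sets $Y_k$), it suffices to prove $\nu(\sigma\cap Y_k)=0$ for every $k$, because $Y_k\nearrow X$. So fix $k\in\N$, write $g_n=\D\nu_n/\D\mu_n$, and consider the countable family $\ff_k=\{\chi_{Y_k}\Delta_n g_n\colon n\in\N\}\subseteq L^1(\mu)$. By hypothesis $\frm(\ff_k)\neq\varnothing$; note that $\mu$ restricted to $Y_k$ — or rather, we work with functions supported in $Y_k$ — but $\mu$ itself need not be finite, so to apply Lemma \ref{dpvp}(i) I would replace $\mu$ by its restriction to $Y_k$, which \emph{is} finite since $Y_k$ is a cylinder set of finite measure (being the preimage under some $\delta^m$ of a finite-measure set, using that $\Delta_m$ is isometric so $\mu((\delta^m)^{-1}(\sigma_m))=\mu_m(\sigma_m)$). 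The family $\{\chi_{Y_k}\Delta_n g_n\}$ then lies in $L^1(\mu|_{Y_k})$ and still has nonempty $\frm$, so by Lemma \ref{dpvp}(i) it is $*$-weakly compact in $L^1(\mu|_{Y_k})$.

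Next I would extract a $*$-weakly convergent subsequence $\chi_{Y_k}\Delta_{n_j}g_{n_j}\rightharpoonup h_k$ in $L^1(\mu|_{Y_k})$, with $h_k\Ge 0$. The key identification step is: for any cylinder set $(\delta^l)^{-1}(\tau)$ with $l\le k$ (say) and $\tau\in(\aa_l)_{\mu_l}$, one has, for $n\ge l$,
\begin{align*}
\int_X \chi_{(\delta^l)^{-1}(\tau)}\,\chi_{Y_k}\,\Delta_n g_n\,\D\mu
=\int_{X_n}\chi_{(\delta_n^l)^{-1}(\tau)}\,\chi_{\delta_n(Y_k)}\,g_n\,\D\mu_n
=\nu_n\big((\delta_n^l)^{-1}(\tau)\cap \delta_n(Y_k)\big),
\end{align*}
using that $\Delta_n$ is isometric (so the change of variables under $\delta^n$ sends $\mu$ to $\mu_n$ and $\Delta_n g_n$ back to $g_n$) and that $Y_k$ is a cylinder set, hence $\chi_{Y_k}=\chi_{E_k}\circ\delta^n$ for $n$ large. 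By condition $(\mathtt P_7)$ applied to $\nu$, the right side converges as $n\to\infty$ to $\nu\big((\delta^l)^{-1}(\tau)\cap Y_k\big)$. On the other hand, $\chi_{(\delta^l)^{-1}(\tau)}\chi_{Y_k}\in L^\infty(\mu|_{Y_k})$, so $*$-weak convergence gives that the left side along the subsequence converges to $\int_X \chi_{(\delta^l)^{-1}(\tau)}\,h_k\,\D\mu$. Hence $\nu\big((\delta^l)^{-1}(\tau)\cap Y_k\big)=\int_{(\delta^l)^{-1}(\tau)}h_k\,\D\mu$ for all cylinder sets; since cylinder sets generate $\aa$ (condition $(\mathtt P_6$)) and both sides are measures in the cylinder-set argument, a monotone class / $\pi$-$\lambda$ argument upgrades this to $\nu(\sigma\cap Y_k)=\int_\sigma h_k\,\D\mu$ for all $\sigma\in\aa$. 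In particular $\mu(\sigma)=0$ forces $\nu(\sigma\cap Y_k)=0$, and letting $k\to\infty$ finishes the proof.

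The main obstacle I expect is the bookkeeping around finiteness and the interchange of limits: one has to be careful that $\mu|_{Y_k}$ is genuinely finite (which needs the isometry of $\Delta_m$ and the fact that $Y_k$ is a \emph{finite}-measure cylinder — this should be arranged, perhaps by intersecting the given $Y_n$ with finite-measure cylinders, or it is implicit in ``cylinder set''), and that the function we are testing against, $\chi_{(\delta^l)^{-1}(\tau)\cap Y_k}$, really does lie in $L^\infty$ so that $*$-weak convergence applies. A secondary technical point is passing from a convergent \emph{subsequence} to a statement about $\nu$ itself: this is harmless because the limit $\int_{(\delta^l)^{-1}(\tau)}h_k\,\D\mu$ is pinned down by $(\mathtt P_7)$ independently of the subsequence, so $h_k$ is (the restriction to $Y_k$ of) a well-defined density and the subsequence issue evaporates. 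Everything else — the change-of-variables computations, the monotone class extension, the $\sigma$-finiteness reduction via $Y_n\nearrow X$ — is routine given the machinery already set up in Section \ref{projsyst}.
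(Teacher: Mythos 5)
Your proposal is correct and follows essentially the same route as the paper's proof: $*$-weak compactness in $L^1$ of $\{\chi_{Y_k}\Delta_n(\D\nu_n/\D\mu_n)\}_{n\in\N}$ on the restriction of $\mu$ to $Y_k$ via Lemma \ref{dpvp}(i), identification of the weak limit on cylinder sets using the isometry of the maps $\Delta_n$ together with $(\mathtt{P}_7)$ for $\nu$, and then extension to all of $\aa$ by $(\mathtt{P}_6)$ and a uniqueness-of-measures ($\pi$--$\lambda$) argument. The only real difference is organizational: the paper performs a diagonal extraction so as to glue the local limits $h^{Y_k}$ into a single global density $h$ with $\nu=h\,\D\mu$, whereas you keep a separate density on each $Y_k$ and let $k\to\infty$, which indeed suffices for $\nu\ll\mu$; your caveat about finiteness of $\mu(Y_k)$, needed to invoke Lemma \ref{dpvp}(i), is likewise tacit in the paper's own proof.
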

\begin{proof}
For $k\in\N$, let $\aa_{Y_k}$ stand for the $\sigma$-algebra $\{\omega\in\aa\colon \omega\subseteq {Y_k}\}$,
let $\mu_{Y_k}$ denote a restriction of $\mu$ to $\aa_{Y_k}$ and let $h_n|_{Y_k}$, $n\in\N$, be a restriction of $\Delta_n(\D\nu_n/\D\mu_n)$ to ${Y_k}$. By Lemma \ref{dpvp} (with $p=1$), the sequence $\{h_n\}_{n\in\N}$ has a subsequence $\{h_{n(k,1)}\}_{k\in\N}$ such that $\{h_{n(k,1)}|_{Y_1}\}_{k\in\N}$ converges $*$-weakly to a function $h^{Y_1}\in L^{1}({Y_1},\aa_{Y_1},\mu_{Y_1})$. The same argument implies that $\{h_{n(k,1)}\}_{k\in\N}$ has a subsequence $\{h_{n(k,2)}\}_{k\in\N}$ such that $\{h_{n(k,2)}|_{Y_2}\}_{k\in\N}$ converges $*$-weakly to a function $h^{Y_2}\in L^{1}({Y_2},\aa_{Y_2},\mu_{Y_2})$.
Clearly, we have
    \begin{align}\label{07.09.01}
    h^{Y_2}|_{Y_1}=h^{Y_1}\quad \text{a.e.\ $[\mu]$}.
    \end{align}
If we repeat the argument $l$ times we get a subsequence $\{h_{n(k,l)}\}_{k\in\N}$ of a sequence $\{h_{n(k,l-1)}\}_{k\in\N}$ such that $\{h_{n(k,l)}|_{Y_{l}}\}_{k\in\N}$ is converging $*$-weakly to a function $h^{Y_l}$ contained in $L^{1}({Y_l},\aa_{Y_l},\mu_{Y_l})$.
Moreover, by \eqref{07.09.01} we have
    \begin{align}\label{07.09.02}
    h^{Y_l}|_{Y_k}=h^{Y_k}\quad \text{a.e.\ $[\mu]$,}\quad  k\Le l.
    \end{align}
Since $\bigcup_{n\in\N}Y_n=X$, we can use \eqref{07.09.02} as to obtain a function $h\colon X\to \overline{\R}_+$
which is $\aa$-measurable and satisfies
    \begin{align}\label{07.09.03}
    h|_{Y_l}=h^{Y_l}\quad \text{a.e.\ $[\mu]$},\quad l\in\N.
    \end{align}
Now, for $m\in\N$, let $\sigma\in\aa_m$. Set $\tau=\big(\delta^m\big)^{-1}(\sigma)$. Fix $N\in \N$. There exists $l\in\N$ and $Z_N\in\aa_l$ such that $Y_N=(\delta^l)^{-1}(Z_N)$. Without loss of generality we may assume that $l\Ge m$. Define $\tau_N=\big(\delta^m\big)^{-1}(\sigma)\cap Y_N$ and $\omega_N=\big(\delta^m_{l}\big)^{-1}(\sigma)\cap Z_N$. Then, by \eqref{07.09.03}, ($\mathtt{P}_5$) and ($\mathtt{P}_7$), we gather that
\begin{align*}
    \int_{\tau_N} h\D\mu
        &=
    \lim_{k\rightarrow\infty}\int_{\tau_N}\Delta_{n(k,N)}h_{n(k,N)}\D\mu\\
        &=
    \lim_{k\rightarrow\infty}\int\Delta_{n(k,N)}\left(\chi_{(\delta^l_{n(k,N)})^{-1}(\omega_N)}h_{n(k,N)}\right)\D\mu\\
        &=
    \lim_{k\rightarrow\infty} \int_{(\delta^l_{n(k,N)})^{-1}(\omega_N)}h_{n(k,N)}\D\mu_{n(k,N)}\\
        &=
    \lim_{k\rightarrow\infty}\int_{(\delta^l_{n(k,N)})^{-1}(\omega_N)}\D\nu_{n(k,N)}\\
        &=
    \nu(\tau_N).
\end{align*}
This, ($\mathtt{P}_6$) and \cite[Theorem 10.3]{bil} imply that $\nu(\tau)=\int\chi_\tau h\D\mu$ for every $\tau\in\aa$ which proves our claim.
\end{proof}
\begin{rem}
Regarding Lemma \ref{absgen1}, we mention the paper \cite{kak} where necessary and sufficient conditions for equivalence (in sense of absolute continuity) of tensor product measures are supplied. Those conditions, however, cannot be applied in our context (because, in general, measures of the form $\mu\circ A^{-1}$ are not tensor products).
\end{rem}
Now, suppose $A$ is an $\aa$-measurable transformation of $X$. Consider the following condition:
\begin{align}\label{DAG}
   \begin{minipage}{32em}
    For all $k\in\N$ and $\sigma\in\aa_k$ there is
$$
    \lim_{n\to\infty}\mu\Big((\delta_n^k\circ A_n\circ \delta^n)^{-1}(\sigma)
        \triangle
    \big(\delta^k \circ A\big)^{-1}(\sigma)\Big)=0.
$$
   \end{minipage}
\end{align}
Next theorem shows that if $A$ can be approximated (in a sense of conditions \eqref{DAG}) by a sequence of $\aa_n$-measurable transformations $A_n$, then the composition operator $C_A$ is exactly the inductive limit of operators $C_{A_n}$.
\begin{thm}\label{absconA}
Let $(X,\aa,\mu)=\ILIM\psnmu$. Let $A_n$ be an $\aa_n$-measurable transformation of $X_n$ for $n\in\N$ and let $A$ be an $\aa$-measurable transformation $X$ such that condition \eqref{DAG} is satisfied.
\begin{enumerate}
\item[(i)] Then $(X,\aa,\mu\circ A^{-1})=\LIM(X_n,\aa_n,\mu_n\circ A_n^{-1})$.
\item[(ii)] If $\mu_n\circ A^{-1}_n\ll\mu_n$ for all $n\in\N$ and there exists a sequence $\{Y_n\}_{n\in\N}$ of cylinder sets such that $Y_n\nearrow X$ as $n\to\infty$, $\big\{\chi_{Y_k}\Delta_n\hsf^{A_n}\colon n\in\N\big\}\subseteq L^1(\mu)$ for every $k\in\N$ and $\frm\Big(\big\{\chi_{Y_k}\Delta_n\hsf^{A_n}\colon n\in\N\big\}\Big)\neq\varnothing$ for every $k\in \N$, then $C_A$ is densely defined and closed in $L^2(\mu)$ and $C_A=\overline{\LIM C_{A_n}|_{\tilde\frs}}$, where $\tilde\frs=\{\chi_{Y_k} f\colon f\in \frs, k\in\N\}$.
\end{enumerate}
\end{thm}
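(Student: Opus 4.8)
For part~(i) I would note that the connecting maps $\delta_m^n$ and $\delta^n$, the $\sigma$-algebras $\aa_n,\aa$, and conditions $(\mathtt{P}_1)$--$(\mathtt{P}_6)$ do not involve the measures, so they pass verbatim from $\{\psnmu\}_{n\in\N}$ to $\{(X_n,\aa_n,\mu_n\circ A_n^{-1})\}_{n\in\N}$ with candidate target $(X,\aa,\mu\circ A^{-1})$; thus only $(\mathtt{P}_7)$ must be checked. Fixing $n$ and $\sigma\in\aa_n$, I would use that each $\Delta_m$ is isometric, hence $\mu\circ(\delta^m)^{-1}=\mu_m$ on $\aa_m$ by Lemma~\ref{lpisom}, to rewrite $(\mu_m\circ A_m^{-1})\big((\delta_m^n)^{-1}(\sigma)\big)=\mu\big((\delta_m^n\circ A_m\circ\delta^m)^{-1}(\sigma)\big)$ and $(\mu\circ A^{-1})\big((\delta^n)^{-1}(\sigma)\big)=\mu\big((\delta^n\circ A)^{-1}(\sigma)\big)$. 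Then \eqref{DAG} with $k=n$ says precisely that the symmetric difference of the two sets on the right is $\mu$-null in the limit, and since $|\mu(E)-\mu(F)|\le\mu(E\triangle F)$ (with the evident reading when a side is infinite), the two quantities converge to each other. That proves~(i).

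For part~(ii) the plan has four stages. First, I would apply Lemma~\ref{absgen1} with $\nu_n=\mu_n\circ A_n^{-1}$ and $\nu=\mu\circ A^{-1}$, which is legitimate by~(i) and, since $\D\nu_n/\D\mu_n=\hsf^{A_n}$, has exactly the hypotheses assumed here, to conclude $\mu\circ A^{-1}\ll\mu$; thus $A$ is nonsingular and $C_A$ is well-defined and closed by \cite[Proposition~1.5]{bud-jab-jun-sto1}. The density $h$ produced in the proof of Lemma~\ref{absgen1} is, by $\sigma$-finiteness of $\mu$ and uniqueness of Radon--Nikodym derivatives, a version of $\hsf^A$ satisfying $h|_{Y_k}\in L^1(\mu_{Y_k})$ for each $k$; hence $\int_{Y_k}\hsf^A\D\mu<\infty$, and since $Y_k\nearrow X$ this forces $\hsf^A<\infty$ a.e.\ $[\mu]$, so $C_A$ is densely defined by \eqref{dense}.

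Second, I would show $\tilde\frs\subseteq\dom(\LIM C_{A_n})$ together with $\LIM C_{A_n}|_{\tilde\frs}\subseteq C_A$. Every $f\in\tilde\frs$, being $\chi_{Y_k}$ times a cylinder simple function, equals $\Lambda_j\tilde f$ for some $j$ and a simple $\tilde f=\sum_i c_i\chi_{\eta_i}\in L^p(\mu_j)$ whose level sets satisfy $(\delta^j)^{-1}(\eta_i)\subseteq Y_k$. For $m\ge j$, transporting along $\delta^m$ via $\mu\circ(\delta^m)^{-1}=\mu_m$ and $(\mathtt{P}_5)$ gives $\int\chi_{(\delta_m^j)^{-1}(\eta_i)}\hsf^{A_m}\D\mu_m=\int_X\chi_{(\delta^j)^{-1}(\eta_i)}\,\Delta_m\hsf^{A_m}\D\mu\le\|\chi_{Y_k}\Delta_m\hsf^{A_m}\|_{L^1(\mu)}<\infty$, so $\Lambda_j^m\tilde f\in\dom(C_{A_m})$ and $f\in\dom_\infty$. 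Moreover $\Lambda_m C_{A_m}\Lambda_j^m\tilde f=\tilde f\circ(\delta_m^j\circ A_m\circ\delta^m)$ and $C_Af=f\circ A=\tilde f\circ(\delta^j\circ A)$, with $f\in\dom(C_A)$ because $\int|f\circ A|^2\D\mu=\int|f|^2\hsf^A\D\mu\le\|\tilde f\|_{L^\infty}^2\int_{Y_k}\hsf^A\D\mu<\infty$ by the measure transport theorem; expanding over the $\eta_i$ bounds the $L^2(\mu)$-distance between these two functions by $\sum_i|c_i|\,\mu\big((\delta_m^j\circ A_m\circ\delta^m)^{-1}(\eta_i)\triangle(\delta^j\circ A)^{-1}(\eta_i)\big)^{1/2}$, which tends to $0$ by \eqref{DAG} with $k:=j$. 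Hence $f\in\dom(\LIM C_{A_n})$ and $(\LIM C_{A_n})f=C_Af$.

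Third --- and this is the stage I expect to be the main obstacle --- I would prove that $\tilde\frs$ is a core of $C_A$. Since $f\in\dom(C_A)\iff\int|f|^2(1+\hsf^A)\D\mu<\infty$, with the graph norm there coinciding with the $L^2\big((1+\hsf^A)\D\mu\big)$-norm, this reduces to density of $\tilde\frs$ in $L^2(\mu')$ for $\mu':=(1+\hsf^A)\D\mu$. Replacing each $Y_k$ by $Y_k\cap W_k$, where $\{W_k\}$ is a $\mu$-finite family of cylinder sets exhausting $X$ (available by $\sigma$-finiteness via Lemma~\ref{lpisom}), only shrinks $\tilde\frs$ inside itself and keeps all hypotheses, so I may assume $\mu(Y_k)<\infty$; then $\mu'(Y_k)<\infty$ by the bound of the previous stage. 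For $f\in L^2(\mu')$ I would approximate in three passes: $\chi_{Y_k}f\to f$ in $L^2(\mu')$ by dominated convergence; for fixed $k$, $\chi_{Y_k}f$ by $\mu'|_{Y_k}$-simple functions supported in $Y_k$; and each occurring level set by a cylinder set intersected with $Y_k$, using the Approximation Theorem \cite[Approximation Theorem~1.3.11]{ash-dol} for the finite measure $\mu'|_{Y_k}$ on the algebra $\{C\cap Y_k:C\text{ a cylinder set}\}$. The result lies in $\tilde\frs$, giving the density. Finally, since $C_A$ is closed and $\LIM C_{A_n}|_{\tilde\frs}\subseteq C_A$, we get $\overline{\LIM C_{A_n}|_{\tilde\frs}}\subseteq C_A$, while $C_A|_{\tilde\frs}=\LIM C_{A_n}|_{\tilde\frs}$ and the core property give the reverse inclusion, so $C_A=\overline{\LIM C_{A_n}|_{\tilde\frs}}$. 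I single out the core step because plain $L^2(\mu)$-density of cylinder functions is nowhere near sufficient: one must approximate simultaneously in the $\mu$- and $\hsf^A\D\mu$-norms, which is exactly what forces the passage to finite-measure $Y_k$'s and the application of the Approximation Theorem on $(Y_k,\aa_{Y_k},\mu'|_{Y_k})$.
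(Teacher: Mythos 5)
Your argument is correct and follows the paper's overall skeleton: part (i) is proved exactly as in the paper (transport the cylinder measures through the isometries $\Delta_m$ so that \eqref{DAG} bounds the difference of measures by the measure of a symmetric difference), and in part (ii) you, like the paper, apply Lemma \ref{absgen1} with $\nu_n=\mu_n\circ A_n^{-1}$ and use \eqref{DAG} to place $\tilde\frs$ in both $\dom(\LIM C_{A_n})$ and $\dom(C_A)$ with matching values, and then reduce everything to showing that $\tilde\frs$ is a core of $C_A$. Two steps are implemented genuinely differently. First, you get dense definiteness from $\hsf^A<\infty$ a.e.\ $[\mu]$ via \eqref{dense}, extracting $\int_{Y_k}\hsf^A\D\mu<\infty$ from the \emph{proof} (not the statement) of Lemma \ref{absgen1}; the paper instead shows $\tilde\frs\subseteq\dom(C_A)$ directly, combining the limit formula furnished by (i) with the truncation estimate $\int_{\varOmega}\Delta_m\hsf^{A_m}\D\mu\Le t_0\mu(\varOmega)+\sup_m\int_{Y_N}G(\Delta_m\hsf^{A_m})\D\mu$, which uses only the stated hypotheses and only finiteness of $\mu(\varOmega)$. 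If you prefer not to lean on the internals of the lemma, the same truncation applied to your finite-measure level sets $(\delta^j)^{-1}(\eta_i)$ replaces the bound $\|\tilde f\|_{L^\infty}^2\int_{Y_k}\hsf^A\D\mu$ in your second stage and also gives $\mu'(Y_k\cap W_k)<\infty$ in your third. Second, your core argument differs from the paper's: you identify the graph norm of $C_A$ with the norm of $L^2\big((1+\hsf^A)\D\mu\big)$ and prove density of $\tilde\frs$ there by passing to finite-measure cylinder pieces $Y_k\cap W_k$ and invoking the Approximation Theorem, whereas the paper chooses, for non-negative $f\in\dom(C_A)$, a sequence $\chi_{Y_{\alpha(n)}}f_n\nearrow f$ with $f_n\in\frs$ and concludes by the monotone convergence theorem applied simultaneously to $f$ and $f\circ A$. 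Your route makes explicit the key point (approximation must be in $(1+\hsf^A)\D\mu$, not merely in $L^2(\mu)$) and rests on a standard measure-theoretic approximation, at the cost of the reduction to finite-measure sets; the paper's monotone trick avoids that reduction but leaves the construction of the monotone approximating sequence implicit. Your observation that shrinking $Y_k$ to $Y_k\cap W_k$ preserves all hypotheses and stays inside $\tilde\frs$ is correct, so no gap results from it.
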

\begin{proof}
(i) Let $k,n\in\N$ and let $\sigma\in\aa_k$ be such that either $\mu\circ A^{-1} \big((\delta^k)^{-1}(\sigma)\big)<\infty$ or $\mu_n\circ A_n^{-1} \big((\delta_n^k)^{-1}(\sigma)\big)<\infty$. Then we have
    \begin{multline*}
    |\mu\circ A^{-1}\big((\delta^k)^{-1}(\sigma)\big)-\mu_n\circ A_n^{-1}\big((\delta_n^k)^{-1}(\sigma)\big)|=\\
    =\mu\Big((\delta_n^k\circ A_n\circ \delta^n)^{-1}(\sigma) \triangle \big(\delta^k \circ A\big)^{-1}(\sigma)\Big)=0.
    \end{multline*}
This and condition \eqref{DAG} imply that $(X,\aa,\mu\circ A^{-1})=\LIM(X_n,\aa,\mu_n\circ A_n^{-1})$.

(ii) Using \eqref{DAG}, the well-known inequality $\mu(\sigma_1\triangle\sigma_2) \Le \mu(\sigma_1\triangle\sigma_3)+\mu(\sigma_3\triangle\sigma_2)$, and Corollary \ref{denslim+} we deduce that $\tilde\frs\subseteq\dom(\LIM C_{A_n})$.

Lemma \ref{absgen1} and (i) imply that $\mu\circ A^{-1}\ll\mu$, which implies that $C_A$ is well-defined and closed operator in $L^2(\mu)$. Now we prove that $\tilde\frs\subseteq\dom(C_A)$. Fix $N\in\N$. There exists $l\in \N$ and $Z_N\in\aa_l$ such that $Y_N=(\delta^l)^{-1}(Z_N)$. By (i), for all $k\in\N$ and $\sigma\in\aa_k$  we have
    \begin{align}\label{31.08.01}
    \mu \circ A^{-1}\Big(\big(\delta^k\big)^{-1}(\sigma)\cap Y_N\Big)
        =
    \lim_{m\rightarrow\infty}\mu_m\circ A_m^{-1}\Big(\big(\delta_m^k\big)^{-1}(\sigma)\cap (\delta_m^l)^{-1}(Z_N)\Big).
    \end{align}
Fix $k\in \N$ and $\sigma\in(\aa_k)_{\mu_k}$. Let $m\in\N$ satisfy $k,l\Le m$. Set $\varOmega=\big(\delta^k\big)^{-1}(\sigma)\cap Y_N$.
Then we have
    \begin{align}\label{07.09.04}
    \mu_m\circ A_m^{-1}\Big(\big(\delta_m^k\big)^{-1}(\sigma)\cap (\delta_m^l)^{-1}(Z_N)\Big)=\int_{\varOmega} \Delta_m \hsf^{A_m}\D\mu.
    \end{align}
Let $G\in \frm\Big(\big\{\chi_{Y_N}\Delta_n\hsf^{A_n}\colon n\in\N\big\}\Big)$. There exists a non-negative real number $t_0$ such that $\big\{t\in[0,\infty)\colon G(t)\Ge t\big\}=[t_0,\infty)$. Also, for every $m\in \N$, there exist sets $\varTheta_1^m, \varTheta_2^m\in \aa$ such that $X=\varTheta_1^m\cup \varTheta_2^m$, $\varTheta_1^m\cap \varTheta_2^m=\varnothing$ and $\Delta_m \hsf^{A_m}(x)\Le t_0$ for $\mu$-a.e. $x\in \varTheta_1^m$ and $\Delta_m \hsf^{A_m}(x)\Ge t_0$ for $\mu$-a.e. $x\in \varTheta_2^m$. Hence we have
    \begin{align*}
    \int_{\varOmega} \Delta_m \hsf^{A_m}\D\mu
        &=
    \int_{\varOmega\cap X_1^m} \Delta_m \hsf^{A_m}\D\mu+\int_{\varOmega\cap X_2^m} \Delta_m \hsf^{A_m}\D\mu\\
        &\Le
    t_0\mu(\varOmega)+ \sup_{m\in \N} \int_{Y_N} G\big(\Delta_m \hsf^{A_m}\big)\D\mu.
    \end{align*}
This, \eqref{31.08.01} and \eqref{07.09.04} imply that $\chi_{\varOmega} \circ A\in L^2(\mu)$, which means that $\chi_{\varOmega}\in \dom(C_A)$. Since $k\in N$, $\sigma\in(\aa_k)_{\mu_k}$ and $N\in \N$ can be arbitrarily chosen, we see that $\tilde\frs\subseteq\dom(C_A)$.  This and $\sigma$-finiteness of $\mu$ imply that $C_A$ is densely defined.

Clearly, by \eqref{31.08.01} we have
    \begin{align}\label{05.09.01}
    \LIM C_{A_n} (\chi_{Y_N} f)=C_A (\chi_{Y_N} f),\quad f\in \frs,\, N\in \N.
    \end{align}
To conclude the proof it is sufficient to show that $\{\chi_{Y_N} f\colon f\in \frs, N\in\N\}$ is a core for $C_A$. For this, take $f\in\dom(C_A)$. It cause no loss of generality to assume that $f$ is non-negative. Since $\frs$ is dense in $L^2(\mu)$ and $Y_n\nearrow X$ as $n\to\infty$, there exists a sequence $\{f_n\}_{n\in\N}\subseteq \frs$ and a monotonically increasing mapping $\alpha\colon \N\to\N$ such that for $\mu$-a.e. $x\in X$, $(\chi_{Y_{\alpha(n)}} f_n)(x)\nearrow f(x)$ as $n\to\infty$. This implies that for $\mu$-a.e. $x\in X$, $(\chi_{Y_{\alpha(n)}}f_n)(A (x))\nearrow f(A(x))$ as $n\to\infty$. By the Lebesgue's monotone convergence theorem, $\{\chi_{Y_{\alpha(n)}} f_n\}_{n\in\N}$ and $\{(\chi_{Y_{\alpha(n)}} f_n)\circ A\}_{n\in\N}$ converge to $f$ and $f\circ A$ in $L^2(\mu)$, respectively. Therefore, we obtain $C_A\subseteq \overline{C_A|_{\tilde\frs}}$. This together with the fact that $C_A$ is closed implies that $C_A= \overline{C_A|_{\tilde\frs}}$. Using \eqref{05.09.01} we complete the proof.
\end{proof}
Combining Theorem \ref{absconA} and Lemma \ref{indbound} we obtain a criterion for boundedness of $C_A$ written in terms of composition operators $C_{A_n}$, $n\in\N$.
\begin{thm}\label{sufgen}
Let $(X,\aa,\mu)=\ILIM \psnmu$. Let $A_n$ be an $\aa_n$-measurable transformation of $X_n$ such that $\mu_n\circ A^{-1}_n\ll\mu_n$ for every $n\in\N$. Let $A$ be an $\aa$-measurable transformation $X$. If condition \eqref{DAG} holds and
\begin{align}\label{sup}
\sup_{n\in\N}\|\hsf^{A_n}\|_{L^\infty(\mu_n)}<\infty
\end{align}
then $C_A\in\bb\big(L^2(\mu)\big)$ and $C_A=\overline{\LIM C_{A_n}}.$
\end{thm}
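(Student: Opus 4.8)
The plan is to derive the theorem by feeding the hypotheses into Theorem~\ref{absconA}(ii) and then combining the resulting description of $C_A$ with the boundedness part of Lemma~\ref{indbound}.

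First I would check the hypotheses of Theorem~\ref{absconA}(ii). Since $(X,\aa,\mu)=\ILIM\psnmu$, Lemma~\ref{lpisom} gives that $\mu$ is $\sigma$-finite and that $\tfrac{\D\mu\circ(\delta^n)^{-1}}{\D\mu_n}=1$ a.e.\ $[\mu_n]$, so $\mu$-null sets are precisely $(\delta^n)$-preimages of $\mu_n$-null sets; in particular each $\Delta_n$ preserves the $L^\infty$-norm. Fix $\{\sigma_n\}_{n\in\N}\subseteq(\aa_1)_{\mu_1}$ with $\sigma_n\nearrow X_1$ (possible by $\sigma$-finiteness of $\mu_1$) and put $Y_n=(\delta^1)^{-1}(\sigma_n)$; these are cylinder sets with $\mu(Y_n)=\mu_1(\sigma_n)<\infty$ and $Y_n\nearrow X$. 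Writing $M_0=\sup_{n\in\N}\|\hsf^{A_n}\|_{L^\infty(\mu_n)}<\infty$ (this is \eqref{sup}), we get $\Delta_n\hsf^{A_n}\Le M_0$ a.e.\ $[\mu]$, hence $\chi_{Y_k}\Delta_n\hsf^{A_n}\Le M_0\chi_{Y_k}$. Therefore $\{\chi_{Y_k}\Delta_n\hsf^{A_n}\colon n\in\N\}\subseteq L^1(\mu)$ for every $k$, and $G(t)=t^2$ belongs to $\frm\big(\{\chi_{Y_k}\Delta_n\hsf^{A_n}\colon n\in\N\}\big)$ because $\int(\chi_{Y_k}\Delta_n\hsf^{A_n})^2\D\mu\Le M_0^2\mu(Y_k)$ uniformly in $n$. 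As $\mu_n\circ A_n^{-1}\ll\mu_n$ by hypothesis and \eqref{DAG} is assumed, all assumptions of Theorem~\ref{absconA}(ii) hold, so $C_A$ is densely defined and closed in $L^2(\mu)$ with $C_A=\overline{\LIM C_{A_n}|_{\tilde\frs}}$, where $\tilde\frs=\{\chi_{Y_k}f\colon f\in\frs,\ k\in\N\}$. In particular $\tilde\frs\subseteq\dom(\LIM C_{A_n})$, and since $\tilde\frs$ is dense in $L^2(\mu)$ ($\frs$ is dense and $Y_n\nearrow X$), the operator $\LIM C_{A_n}$ is densely defined.

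Next I would bring in boundedness. By \eqref{norm} and \eqref{sup}, each $C_{A_n}\in\bb(L^2(\mu_n))$ and $\sup_{n\in\N}\|C_{A_n}\|=\sup_{n\in\N}\|\hsf^{A_n}\|_{L^\infty(\mu_n)}^{1/2}<\infty$, so Lemma~\ref{indbound}(2) applies (its standing hypothesis, dense definiteness of $\LIM C_{A_n}$, has just been established) and yields that $\LIM C_{A_n}$ is closable and $\overline{\LIM C_{A_n}}\in\bb(L^2(\mu))$. (One could equally argue through Proposition~\ref{cauchy''}: its hypothesis (i) is \eqref{sup}, while its hypothesis (ii) follows from \eqref{DAG} together with $\mu(\sigma_1\triangle\sigma_2)\Le\mu(\sigma_1\triangle\sigma_3)+\mu(\sigma_3\triangle\sigma_2)$ applied to the sets $(\delta_n^k\circ A_n\circ\delta^n)^{-1}(\sigma)$ and $(\delta^k\circ A)^{-1}(\sigma)$, which turns the convergence in \eqref{DAG} into the required Cauchy condition.)

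Finally I would upgrade $C_A=\overline{\LIM C_{A_n}|_{\tilde\frs}}$ to $C_A=\overline{\LIM C_{A_n}}$. Since $\sup_n\|C_{A_n}\|<\infty$, the operator $\LIM C_{A_n}$ is bounded on its domain, hence its restriction $\LIM C_{A_n}|_{\tilde\frs}$ is a bounded densely defined operator, whose closure $C_A$ is therefore bounded and defined on all of $L^2(\mu)$, giving $C_A\in\bb(L^2(\mu))$. On the other hand $\LIM C_{A_n}|_{\tilde\frs}\subseteq\LIM C_{A_n}$ yields $C_A=\overline{\LIM C_{A_n}|_{\tilde\frs}}\subseteq\overline{\LIM C_{A_n}}$, and since both are everywhere-defined bounded operators on $L^2(\mu)$, they coincide, so $C_A=\overline{\LIM C_{A_n}}$. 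The only steps requiring care are the transfer of the uniform $L^\infty$-bound from the $\mu_n$ to the target measure $\mu$ (handled via Lemma~\ref{lpisom}) and this last identification, where one must keep $\overline{\LIM C_{A_n}|_{\tilde\frs}}$ and $\overline{\LIM C_{A_n}}$ distinct until boundedness forces them together; the remainder is a direct assembly of Theorem~\ref{absconA} and Lemma~\ref{indbound}.
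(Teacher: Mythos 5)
Your proposal is correct and takes essentially the same route as the paper: you verify the hypotheses of Theorem~\ref{absconA}(ii) from \eqref{sup} using finite-measure cylinder sets $Y_k\nearrow X$ (with $G(t)=t^2$), and then get boundedness of $\overline{\LIM C_{A_n}}$ from the uniform norm bound via Lemma~\ref{indbound}(2), which is exactly the mechanism the paper invokes through Proposition~\ref{cauchy''}. Your explicit final identification of $\overline{\LIM C_{A_n}|_{\tilde\frs}}$ with $\overline{\LIM C_{A_n}}$ merely spells out a step the paper leaves implicit.
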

\begin{proof}
By $\sigma$-finiteness of $\mu$ there exists a sequence $\{Y_k\}_{k\in\N}\subseteq (\aa)_{\mu}$ of cylinder sets such that $Y_k\nearrow X$ as $n\to\infty$. Clearly, \eqref{sup} implies that for every $k\in\N$ we have $\big\{\chi_{Y_k}\Delta_n\hsf^{A_n}\colon n\in\N\big\}\subseteq L^1(\mu)$ and $\frm\Big(\big\{\chi_{Y_k}\Delta_n\hsf^{A_n}\colon n\in\N\big\}\Big)\neq\varnothing$. By Theorem \ref{absconA} the operator $C_A$ is densely defined operator in $L^2(\mu)$ and $C_A=\overline{\LIM C_{A_n}|_{\tilde\frs}}$, where $\tilde\frs=\{\chi_{Y_k} f\colon f\in \frs, k\in\N\}$. Since condition \eqref{DAG} yields condition (ii) of Proposition \ref{cauchy} with any $\sigma\in\aa_k$ and $k\in\N$ (see proof of Theorem \ref{absconA}(i)), the operator $\overline{\LIM C_{A_n}}$ is bounded due to Proposition \ref{cauchy''}. This concludes the proof.
\end{proof}
\section{Examples and Applications}
In this part of the paper we demonstrate how inductive techniques of Theorems \ref{absconA} and \ref{sufgen} can be used when investigating composition operators in more concrete situations. We include some illustrative examples.

First, we provide a version of Theorem \ref{sufgen} in the context of $L^2$-space with respect to the gaussian measure on $\R^\infty$. Recall that the gaussian measure $\munfty$ on $\R^\infty$ is the tensor product measure $\munfty=\mathtt{g}\D\M_1\otimes \mathtt{g}\D\M_1\otimes\ldots$, where $\mathtt{g}(x)=\frac{1}{\sqrt{2\pi}} \exp({-\frac{x^2}{2}})$ for $x\in\R$. By the $n$-dimensional gaussian measure, $n\in\N$, we understand the measure $\MUN{n}$ given by $\D\MUN{n}=\frac{1}{(\sqrt{2\pi})^n}\, \exp({-\frac{x_1^2+\ldots +x_n^2}{2}}) \D\M_n$. Clearly, if $\delta^n$ and $\delta^n_k$ are the projections from $\R^\infty$ and $\R^k$ (respectively) onto $\R^n$, we have
    \begin{align*}
    \big(\R^\infty,\bor{\R^\infty},\munfty\big)
        =
    \ILIM\Big(\R^{n},\bor{\R^{n}},\MUN{n}\Big)
    \end{align*}
and consequently
    \begin{align*}
    L^2(\munfty)
        =
    \ILIM L^2(\MUN{n}).
    \end{align*}
Let $(a_{ij})_{i,j\in\N}$ be a matrix with real entries. We say that a transformation $A$ of $\R^\infty$ is induced by $(a_{ij})_{i,j\in\N}$ if the following condition holds\footnote{We assume that all the series $\sum_{j\in\N} a_{kj}\,x_j$, $k\in\N$, are convergent.}
\begin{align*}
\A(x_1,x_2,\ldots)=\big(\sum_{j\in\N} a_{1j}\,x_j,\sum_{j\in\N} a_{2j}\,x_j,\ldots\big),\quad (x_1, x_n, \ldots)\in\R^\infty.
\end{align*}
In an analogical way we define a transformation $A$ of $\R^n$ to be induced by a finite dimensional matrix $(a_{ij})_{i,j=1}^n$.

In view of \eqref{norminv}, Theorem \ref{absconA} can be rewritten in the present context in the following manner. Below, $\|\cdot\|$ denotes\footnote{For simplicity we do not make the dependence of $\|\cdot\|$ on $n$ explicit.} the Euclidean norm on $\R^n$.
\begin{cor}\label{gaussdense+}
Let $\A$ be a transformation of $\R^\infty$ induced by a matrix $(a_{ij})_{i,j\in\N}$. Let $\A_n$, $n\in\N$, be the linear transformation of $\R^n$ induced by the matrix $(a_{ij})_{i,j=1}^n$. If the following conditions are satisfied:
\begin{enumerate}
\item[(i)] for every $n\in\N$, $A_n$ is invertible,
\item[(ii)] for every $j\in\N$ there is $K\in\N$ such that $a_{j,k}=0$ for all $k\Ge K$,
\item[(iii)] there exists a sequence $\{\sigma_k\}_{k\in\N}$ of sets $\sigma_k\in\bor{\R^k}$ such that
\begin{itemize}
\item $\sigma_k\times \R^\infty\nearrow \R^\infty$ as $k\to\infty$,
\item $\chi_{\sigma_k\times \R^{n-k}}\cdot \exp \tfrac12 \big( \|\cdot\|^2-\|A_{n}^{-1}(\cdot)\|^2\big)\in L^1(\MUN{n})$ for every $n\Ge k$,
\item $\frm\bigg( \Big\{\big|\det A_{n}^{-1}\big|\cdot \chi_{\sigma_k\times \R^{n-k}}\cdot \exp \tfrac12 \big( \|\cdot\|^2-\|A_{n}^{-1}(\cdot)\|^2\big)\colon n\Ge k\Big\}\bigg)\neq\varnothing$,
\end{itemize}
\end{enumerate}
then $C_A$ is densely defined operator in $L^2(\munfty)$ and $C_A=\overline{\LIM C_{A_n}|_{\tilde\frs}}$, where $\tilde\frs=\{\chi_{\sigma_k\times \R^\infty} f\colon f\in \frs, k\in\N\}$.
\end{cor}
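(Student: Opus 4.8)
The plan is to derive Corollary \ref{gaussdense+} as a direct specialization of Theorem \ref{absconA}(ii) to the projective system $\big(\R^n,\bor{\R^n},\MUN{n}\big)$ with target space $\big(\R^\infty,\bor{\R^\infty},\munfty\big)$, using the explicit Radon-Nikodym formula \eqref{norminv}. First I would record that this is indeed an isometric inductive limit: the maps $\delta^n\colon\R^\infty\to\R^n$ and $\delta^n_k\colon\R^k\to\R^n$ are the coordinate projections, and since $\munfty$ is a tensor product of the one-dimensional gaussian measures, $\munfty\circ(\delta^n)^{-1}=\MUN{n}$, so the operators $\Delta_n$ are genuine isometries and $L^2(\munfty)=\ILIM L^2(\MUN{n})$ (this is already asserted in the text preceding the corollary). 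Next I would check that hypotheses (i) and (ii) together force condition \eqref{DAG}. Invertibility of each $A_n$ guarantees $\MUN{n}\circ A_n^{-1}\ll\MUN{n}$ with density given by \eqref{norminv}, so $C_{A_n}$ is a well-defined closed operator. The finite-support condition (ii) on the rows of the matrix means that for each fixed $k$ and each coordinate index $j\le k$, the $j$-th coordinate of $A(x)$ depends on only finitely many $x_i$; hence for $n$ large enough (larger than all the relevant $K$'s for $j=1,\dots,k$) one has the exact identity $\delta^k_n\circ A_n\circ\delta^n=\delta^k\circ A$ as maps $\R^\infty\to\R^k$, so the symmetric difference in \eqref{DAG} is eventually the empty set and the limit is trivially $0$.

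With \eqref{DAG} in hand, I would then match the remaining data. Put $Y_k=\sigma_k\times\R^\infty=(\delta^k)^{-1}(\sigma_k)$; the first bullet of (iii) says exactly $Y_k\nearrow\R^\infty$, and these are cylinder sets. By \eqref{norminv}, for $n\ge k$,
\[
\Delta_n\hsf^{A_n}=\hsf^{A_n}\circ\delta^n=\big|\det A_n\big|^{-1}\exp\tfrac12\big(\|\cdot\|^2-\|A_n^{-1}(\cdot)\|^2\big)\circ\delta^n,
\]
so that $\chi_{Y_k}\Delta_n\hsf^{A_n}$ corresponds, under the isometry $\Delta_n$ composed with the projection, to the function $|\det A_n|^{-1}\chi_{\sigma_k\times\R^{n-k}}\exp\tfrac12(\|\cdot\|^2-\|A_n^{-1}(\cdot)\|^2)$ on $\R^n$. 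The second and third bullets of (iii) are then precisely the assertions $\{\chi_{Y_k}\Delta_n\hsf^{A_n}\colon n\in\N\}\subseteq L^1(\munfty)$ and $\frm(\{\chi_{Y_k}\Delta_n\hsf^{A_n}\colon n\in\N\})\neq\varnothing$ for every $k$ (here I would note that for $n<k$ there are only finitely many terms, which never affect membership in $L^1$ or the existence of a de la Vallée–Poussin function, so restricting the index set to $n\ge k$ is harmless). Thus all hypotheses of Theorem \ref{absconA}(ii) are verified.

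Finally I would invoke Theorem \ref{absconA}(ii) directly: it yields that $\munfty\circ A^{-1}\ll\munfty$, hence $C_A$ is well-defined and closed in $L^2(\munfty)$, and that $C_A=\overline{\LIM C_{A_n}|_{\tilde\frs}}$ with $\tilde\frs=\{\chi_{Y_k}f\colon f\in\frs,\,k\in\N\}=\{\chi_{\sigma_k\times\R^\infty}f\colon f\in\frs,\,k\in\N\}$. Dense definiteness is part of the conclusion of that theorem (it follows there from $\tilde\frs\subseteq\dom(C_A)$ together with $\sigma$-finiteness of $\munfty$). I expect the only genuinely substantive point — everything else being bookkeeping — to be the verification that (ii) implies the eventual equality $\delta^k_n\circ A_n\circ\delta^n=\delta^k\circ A$ and hence \eqref{DAG}; one must be slightly careful that the finitely many exceptional rows below index $k$ are handled by taking $n$ past the maximum of the corresponding thresholds $K$, and that $A$ being induced by the matrix means its first $k$ coordinates genuinely coincide with those of $A_n$ once $n$ is large. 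The rest is a transcription of the abstract hypotheses into the concrete gaussian language via \eqref{norminv}.
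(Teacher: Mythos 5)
Your proposal is correct and coincides with the paper's own (essentially unwritten) proof: the corollary is presented there precisely as Theorem \ref{absconA}(ii) specialized to the gaussian projective system, with \eqref{norminv} supplying $\hsf^{A_n}$, conditions (i)--(ii) giving the eventual identity $\delta_n^k\circ A_n\circ\delta^n=\delta^k\circ A$ and hence \eqref{DAG}, and $Y_k=\sigma_k\times\R^\infty$ translating condition (iii) into the $L^1$ and $\frm$ hypotheses. Your remark that the finitely many indices $n<k$ are harmless is the right (standard uniform-integrability) observation and closes the only bookkeeping mismatch.
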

An example of densely defined composition operator in $L^2(\munfty)$ is presented below.
\begin{exa}\label{exaunbounded2}
Let $A\colon\R^\infty\to\R^\infty$ be induced by the matrix $(a_{ij})_{i,j\in\N}$ given by
\begin{align*}
a_{ij}=\left\{%
\begin{array}{ll}
    1 , & \hbox{for $i=j$;} \\
    2^{-(i+j)}, & \hbox{$j=i+1$};\\
    0       , & \hbox{otherwise.} \\
\end{array}%
\right.
\end{align*}
Let $A_n\colon\R^n\to\R^n$ be induced $(a_{ij})_{i,j=1}^n$, $n\in\N$. Clearly, the conditions (i) and (ii) of Corollary \ref{gaussdense+} are satisfied. It is elementary to show that $\sup_{n\in\N}\|A_n\|_{\bb(\R^n)}<\sqrt{2}$. This implies that there exists a positive real $t_0$ such that $\|A_n\|_{\bb(\R^n)}^2\Le 2-2t_0$ for every $n\in\N$. Thus we have
    \begin{align*}
    \int_{\R^n}\Big(\E^{\nicefrac{(\|x\|^2-\|A_{n}^{-1}x\|^2)}{2}}\Big)^2\MUN{n}(\D x)
        &=
    (2\pi)^{-\nicefrac{n}{2}}\int_{\R^n}\E^{\frac12\|x\|^2-\|A_n^{-1}x\|^2}\M_n(\D x)\\
        &\Le
    (2\pi)^{-\nicefrac{n}{2}}\int_{\R^n}\E^{\|A_n^{-1}x\|^2(\frac12\|A_n\|^2-1)}\M_n(\D x)\\
        &\Le
    (2\pi)^{-\nicefrac{n}{2}}\int_{\R^n}\E^{-t_0\|A_n^{-1}x\|^2}\M_n(\D x)\\
        &\Le
    (2\pi)^{-\nicefrac{n}{2}}\int_{\R^n}\E^{-\frac{t_0}{2}\|x\|^2}\M_n(\D x),\quad n\in\N.
    \end{align*}
Since $\det A_n=1$ for every $n\in\N$, we see that the condition (iii) of Corollary \ref{gaussdense+} is satisfied (with $G(t)=t^2$ and $\sigma_k=\R^k$). Hence, by Corollary \ref{gaussdense+}, $C_A$ is densely defined in $L^2(\munfty)$ and $C_A=\overline{\LIM C_{A_n}|_{\frs}}$. It is worth noticing that for every $n\in\N$ the norm of $A_n$ in $\bb(\R^n)$ is greater than 1, which (in view of \cite[Proposition 2.2]{sto}) implies that $C_{\A_n}$ is not bounded on $L^2(\MUN{n})$.
\end{exa}
Now we supply a tractable criterion for boundedness of a composition operator $C_A$ in $L^2$-space over an infinite tensor product of arbitrary probability spaces. For this we consider $\{(\Omega_n,\varSigma_n,P_n)\}_{n\in\N}$, a sequence of probabilistic spaces. Let $X_m= \Omega_1\times\ldots\times \Omega_m$, $m\in\N$, and $X=\Omega_1\times \Omega_2\times \ldots$. For $m\Le n$, let $\delta^n$ and $\delta^m_n$ denote the projection from $X$ and $X_n$, respectively, onto $X_m$. Let $\aa_m=\sigma(\varSigma_1\times\ldots\times\varSigma_m)$, $m\in\N$, and $\aa=\sigma\big(\{(\delta^m)^{-1}(\sigma)\colon \sigma\in\aa_m,\, m\in\N\}\big)$. Finally, let $\mu_m= P_1\otimes\ldots\otimes P_m$, $m\in\N$, and $\mu= P_1\otimes P_2\otimes\dots$ (cf. \cite[Section III-3]{nev}). Clearly, we have $(X,\aa,\mu)=\ILIM (X_n,\aa_n,\mu_n)$. It is well-known that
    \begin{align}\label{unitequiv}
    L^2(\mu)\simeq L^2(\mu_n)\otimes L^2(\otimes_{k=n+1}^\infty P_k),\quad n\in\N.
    \end{align}
(In the display above, $"\simeq "$ denotes unitary equivalence.)
Under all those circumstances, by Theorem \ref{sufgen}, we get the following criterion.
\begin{prop}\label{sufbten}
Let  $(X,\aa,\mu)$ and $\{(X_n,\aa_n,\mu_n)\}_{n\in\N}$ be as above. Let $A$ be an $\aa$-measurable transformation of $X$ and $\A_m$, $m\in\N$, be an $\aa_m$-measurable transformation of $X_m$. If the following conditions are satisfied
\begin{enumerate}
\item[(i)] for every $k\in\N$ there is $M\Ge k$ such that $\delta^k\circ A=\delta^k_n\circ A_n \circ \delta^n$ a.e.\ $[\mu]$ for all $n\Ge M$,
\item[(ii)] for every $n\in\N$, $\mu_n\circ\A_n^{-1}\ll\mu_n$,
\item[(iii)] $\sup_{n\in\N}\|\hsf^{\A_n}\|_{L^\infty(\mu_n)}<\infty$,
\end{enumerate}
then $C_\A\in \bb\big(L^2(\mu)\big)$. Moreover, we have
\begin{enumerate}
\item[(iv)] $\|C_\A\|^2=\sup_{n\in\N}\|\hsf^{\A_n}\|_{L^\infty(\mu_n)},$
\item[(v)] the operator $C_\A$ is the limit in the strong operator topology of the sequence $\{C_{\A_n}\otimes I_n\}_{n\in\N}$,  where $I_n$ is the identity operator on $L^2(\otimes_{k=n+1}^\infty P_k)$.
\end{enumerate}
\end{prop}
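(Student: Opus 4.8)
The plan is to read off the operator statement and the identification $C_A=\overline{\LIM C_{A_n}}$ from Theorem~\ref{sufgen}, and then to make the abstract inductive-limit apparatus concrete in the product setting in order to obtain (iv) and (v). First I would check that conditions (i)--(iii) are exactly the hypotheses of Theorem~\ref{sufgen}: condition (i) yields \eqref{DAG}, since for fixed $k\in\N$ and $\sigma\in\aa_k$ the set $(\delta_n^k\circ A_n\circ\delta^n)^{-1}(\sigma)\triangle(\delta^k\circ A)^{-1}(\sigma)$ is $\mu$-null for all $n\Ge M$, so the limit there is $0$; and (ii), (iii) are literally $\mu_n\circ A_n^{-1}\ll\mu_n$ and \eqref{sup}. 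Hence $C_A\in\bb(L^2(\mu))$ and $C_A=\overline{\LIM C_{A_n}}$. Moreover \eqref{norm} together with (iii) gives $\|C_{A_n}\|=\|\hsf^{A_n}\|_{L^\infty(\mu_n)}^{1/2}$, so $\sup_n\|C_{A_n}\|<\infty$ and Lemma~\ref{indbound}(2) yields $\|C_A\|\Le\big(\sup_n\|\hsf^{A_n}\|_{L^\infty(\mu_n)}\big)^{1/2}$, which is the easy half of (iv).

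For (v) I would match the data in \eqref{unitequiv} with the inductive-limit structure: the isometry $\Lambda_n=\Delta_n\colon L^2(\mu_n)\to L^2(\mu)$, $g\mapsto g\circ\delta^n$, becomes the embedding $g\mapsto g\otimes 1$, and $C_{A_n}\otimes I_n$ restricted to cylinder functions corresponds to $\Lambda_n C_{A_n}(\cdot)$ read back through $\Lambda_n$. It then suffices to verify that on the dense set $\frc$ of cylinder functions the sequence $\{(C_{A_n}\otimes I_n)f\}_{n\in\N}$ is eventually constant and equal to $C_Af$: if $f=h\circ\delta^k$ with $h\in L^2(\mu_k)$ then for $n\Ge\max\{k,M\}$ (with $M$ from (i) for this $k$) one has $f=(h\circ\delta_n^k)\otimes 1$ and
\begin{align*}
(C_{A_n}\otimes I_n)f=\big((h\circ\delta_n^k)\circ A_n\big)\otimes 1=h\circ\delta_n^k\circ A_n\circ\delta^n=h\circ\delta^k\circ A=C_Af,
\end{align*}
using (i). Since $\|C_{A_n}\otimes I_n\|=\|C_{A_n}\|$ is bounded uniformly in $n$ and $C_A$ is bounded, a routine density argument then propagates the convergence from $\frc$ to all of $L^2(\mu)$, which is (v).

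For the remaining half of (iv) I would test $C_A$ on the unit vectors $g\otimes 1$ with $g\in L^2(\mu_n)$. By Lemma~\ref{indbound}(1) (applicable since $\sup_n\|C_{A_n}\|<\infty$), $\Lambda_n L^2(\mu_n)\subseteq\dom(\LIM C_{A_n})=\dom(C_A)$, and condition (i) --- which gives $\delta^n\circ A=A_n\circ\delta^n$ a.e.\ $[\mu]$, so that the limit defining $(\LIM C_{A_n})\Lambda_n g$ collapses --- yields $C_A(g\otimes 1)=(g\circ A_n)\otimes 1=(C_{A_n}g)\otimes 1$, whence $\|C_A(g\otimes 1)\|_{L^2(\mu)}=\|C_{A_n}g\|_{L^2(\mu_n)}$. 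Taking the supremum over unit $g$ and over $n$ and invoking \eqref{norm} gives $\|C_A\|^2\Ge\sup_n\|\hsf^{A_n}\|_{L^\infty(\mu_n)}$, which together with the first step proves (iv).

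The arithmetic is light; the delicate part I anticipate is the bookkeeping of the second and fourth steps --- converting the definition of $\dom(\LIM C_{A_n})$ and of the limit operator into the two concrete statements that $C_A$ agrees on cylinder functions with $C_{A_n}\otimes I_n$ for all large $n$ and, on $\Lambda_n L^2(\mu_n)$, with $(C_{A_n}(\cdot))\otimes 1$. This latter identity, equivalently $\delta^n\circ A=A_n\circ\delta^n$ a.e.\ $[\mu]$, is precisely the use to which condition (i) is put and is the technical heart of the argument; one should note in particular that the lower bound in (iv) cannot be read off from (v) alone, strong convergence only delivering $\|C_A\|\Le\liminf_n\|C_{A_n}\|$.
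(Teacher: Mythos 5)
Your treatment of boundedness and of assertion (v) is correct and follows essentially the paper's own route: you match (i)--(iii) with the hypotheses of Theorem \ref{sufgen} (condition (i) giving \eqref{DAG}) to get $C_A\in\bb(L^2(\mu))$ and $C_A=\overline{\LIM C_{A_n}}$, and you obtain the SOT convergence in (v), as the paper does, by checking that $(C_{A_n}\otimes I_n)f$ is eventually equal to $C_Af$ for cylinder functions $f$ and then combining the uniform bound $\sup_n\|C_{A_n}\otimes I_n\|=\sup_n\|C_{A_n}\|<\infty$ with density of cylinder functions.

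The gap is in your lower bound for (iv). Condition (i) does not give $\delta^n\circ A=A_n\circ\delta^n$ a.e.\ $[\mu]$: for a fixed $k$ it only guarantees $\delta^k\circ A=\delta^k_m\circ A_m\circ\delta^m$ for all $m\Ge M$ with some $M\Ge k$ which may be strictly larger than $k$, so the index at which you want to test is in general not admissible. What your computation actually yields is $\|C_A(h\circ\delta^k)\|=\|C_{A_m}(h\circ\delta^k_m)\|$ for $m\Ge M(k)$, i.e.\ $\|C_A\|\Ge\|C_{A_m}|_{\Delta_k^m L^2(\mu_k)}\|$, a bound by restricted norms which need not recover $\sup_n\|C_{A_n}\|$. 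Indeed, under the literal reading of (i) the equality in (iv) can fail: take $\Omega_i=\{0,1\}$ with $P_i(\{0\})=P_i(\{1\})=1/2$, let $A$ and $A_n$ for $n\Ge 2$ be identity maps and $A_1\equiv 0$; then (i)--(iii) hold, $\|C_A\|^2=1$, while $\|\hsf^{A_1}\|_{L^\infty(\mu_1)}=2$. Your argument does go through precisely when the intertwining relation in (i) holds for all $n\Ge k$ (i.e.\ one may take $M=k$), since then $n=k$ gives $\delta^k\circ A=A_k\circ\delta^k$ and hence $C_A\Delta_k g=\Delta_k C_{A_k}g$, which is presumably the situation intended; be aware that the paper's own proof establishes only the boundedness of $C_A$ and assertion (v) and gives no explicit argument for (iv), so this is exactly the point requiring either that strengthened reading of (i) or a restatement of (iv) in terms of the restricted norms. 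Your closing observation that (v) alone only yields $\|C_A\|\Le\liminf_n\|C_{A_n}\|$ is correct, which is why the defect cannot be repaired from the SOT convergence.
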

\begin{proof}
We infer from (ii), (iii) and \eqref{norm} that for every $n\in\N$, $C_{\A_n}\in \bb\big(L^2(\mu_n)\big)$ and
    \begin{align}\label{20.05.2011.01}
    \sup_{k\in\N}\|C_{\A_k}\|= M<\infty.
    \end{align}
By \eqref{unitequiv}, (i) and \eqref{20.05.2011.01}, the sequence $\{C_n\}_{n\in\N}$ of operators given by
\begin{align*}
C_n=C_{\A_n}\otimes I_n,
\end{align*}
is convergent in the strong operator topology to a bounded operator $C$ on $L^2({{\mu}})$. Since (i) yields condition \eqref{DAG}, the operator $C_\A$ is well-defined by Theorem \ref{sufgen}. Clearly, by (i), $C_\A$ and $C$ coincide on cylinder functions, which implies that $C_\A=C$. This completes the proof.
\end{proof}
A nontrivial example of a bounded composition operator acting in an $L^2$-space over the product of probabilistic measures (different than $L^2(\munfty)$) is presented below.
\begin{exa}\label{exatriangle1}
Let $\{\alpha_i\}_{i\in\N}\subseteq(1,\infty)$, $\{\rho_i\colon\R\to(0,\infty)\ |\ i\in\N\}$, and $\{M_{i}\}_{i\in\N}\subseteq(0,\infty)$ satisfy
\begin{itemize}
\item[(i)] $\prod_{i=1}^\infty\alpha_i<\infty$,
\item[(ii)] for every $i\in\N$, $\rho_i \D\M_1$ is a Borel probability measure,
\item[(iii)] for every $i\in\N$, $\rho_i$ is an even piece-wise continuous step function  such that its restriction $\rho_i|_{\R_+}$ to $\R_+$ is decreasing,
\item[(iv)] $\sup\Big\{\frac{\rho_i(x+\varepsilon)}{\rho_i(x)}\,\colon\, x\in\R,\ 0 \Le \varepsilon \Le M_i\Big\}\Le\alpha_i.$
\end{itemize}
(Such $\{M_{i}\}_{i\in\N}$, $\{\alpha_i\}_{i\in\N}$ and $\{\rho_i\}_{i\in\N}$ exist -- see Appendix  \ref{app}.) Consider the measures
\begin{align*}
\mu=\rho_1 \D\M_1\otimes \rho_2 \D\M_1\otimes \ldots
\end{align*}
and
\begin{align*}
\mu_n=\rho_1\D\M_1\otimes\ldots\otimes \rho_n \D\M_1,\quad n\in\N.
\end{align*}
 Suppose $\{p_i\colon\R\to[0,M_i]\ |\ i\in\N\}$ be a family of differentiable functions such that\footnote{This feature of $p_i$'s will be used in Example \ref{exaexa} below} $p_i(x)=0$ for all $x\Le 0$ and $i\in\N$. Let $B\colon\R^\infty\to\R^\infty$ be given by
    \begin{align*}
    B(x_1,x_2,\ldots)=(x_1,x_2+p_2(x_1),x_3+p_3(x_2),\ldots), \quad (x_1,x_2,\ldots)\in\R^\infty,
    \end{align*}
and $A\colon\R^\infty\to\R^\infty$ be the inverse of $B$. For $n\in\N$, let $B_n\colon \R^n\to\R^n$ be given by
    \begin{align*}
    B_n(x_1,x_2,\ldots,x_n)=(x_1,x_2+p_2(x_1),\ldots,x_{n}+p_{n}(x_{n-1}))
    \end{align*}
and let $A_n\colon \R^n\to\R^n$ be its inverse (it exists since the Jacobian determinant of $B_n$ equals $1$). Then, by the change-of-variable theorem (cf. \cite[Theorem 7.26]{rud}), for every $n\in\N$, $C_{A_n}$ is well-defined composition operator in $L^2(\mu_n)$. Moreover, since
    \begin{align*}
    \sup_{x\in\R^n}|\hsf^{A_n}(x)|=\sup_{x\in\R^n}\frac{\rho_1(x_1)\rho_2(x_2+p_2(x_1))\cdots\rho_n(x_n+p_n(x_{n-1}))}{\rho_1(x_1)\cdots\rho_n(x_n)}
    \Le \prod_{i=1}^\infty \alpha_i,
    \end{align*}
we see that $C_{A_n}\in\bb\big(L^2(\mu_n)\big)$. Clearly, the family $\{\hsf^{A_n}\}_{n\in\N}$ is uniformly bounded in $L^\infty$-norms. Hence, in view of Proposition \ref{sufbten}, $C_A\in\bb\big(L^2(\mu)\big)$.
\end{exa}
In the context of gaussian measure $\munfty$ on $\R^\infty$ Proposition \ref{sufbten} reads as follows.
\begin{cor}\label{sufgau}
Let $\A$ be a transformation of $\R^\infty$ induced by a matrix $(a_{ij})_{i,j\in\N}$. Let $\A_n$, $n\in\N$, be the linear transformation of $\R^n$ induced by the matrix $(a_{ij})_{i,j=1}^n$. If the following conditions are satisfied:
\begin{enumerate}
\item[(i)] there exists $\varepsilon>0$ such that $\inf_{n\in\N}|\det \A_n|\geqslant\epsilon$,
\item[(ii)] for every $j\in\N$ there is $K\in\N$ such that $a_{j,k}=0$ for all $k\Ge K$,
\item[(iii)] $\sup_{n\in\N}\|\A_n\|\leqslant1$,
\end{enumerate}
then $C_\A\in \bb\big(L^2(\munfty)\big)$. Moreover, $C_\A$ is the SOT limit of $\{C_{\A_n}\otimes I_n\}_{n\in\N}$, where $I_n$ is the identity operator on $L^2(\munfty)$.
\end{cor}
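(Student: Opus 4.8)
The plan is to obtain this statement as a direct specialization of Proposition \ref{sufbten}. I would take $(\Omega_n,\varSigma_n,P_n)=(\R,\bor\R,\mathtt{g}\,\D\M_1)$ for every $n\in\N$; then $X=\R^\infty$, $X_n=\R^n$, $\mu=\munfty$, $\mu_n=\MUN{n}$, and the $\delta^n$, $\delta^n_k$ are exactly the coordinate projections appearing in the statement. It then remains only to verify hypotheses (i)--(iii) of Proposition \ref{sufbten} for $A$ and the finite sections $A_n$.

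For hypothesis (i) I would use assumption (ii) of the corollary: for each row index $i$ there is $K_i$ with $a_{ij}=0$ whenever $j\Ge K_i$, so the $i$-th coordinate of $A(x)$ is the \emph{finite} sum $\sum_{j<K_i}a_{ij}x_j$; in particular $A$ is $\aa$-measurable, being coordinatewise a finite linear combination of projections. Fixing $k$ and putting $M=\max_{i\Le k}K_i$, one checks that for every $n\Ge M$ the first $k$ coordinates of $A_n(x_1,\dots,x_n)$ agree identically with the first $k$ coordinates of $A(x_1,x_2,\dots)$, which is precisely $\delta^k\circ A=\delta^k_n\circ A_n\circ\delta^n$. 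Hypothesis (ii) is immediate: by assumption (i) the matrix $A_n$ has nonzero determinant, hence $A_n$ is an invertible linear transformation of $\R^n$, and since $\MUN{n}$ has a strictly positive density with respect to $\M_n$, formula \eqref{norminv} exhibits $\hsf^{A_n}$ as a finite Borel function, so $\MUN{n}\circ A_n^{-1}\ll\MUN{n}$.

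The step I expect to carry the content is hypothesis (iii), the uniform $L^\infty$-bound on $\hsf^{A_n}$. Here I would invoke \eqref{norminv} to write $\hsf^{A_n}(x)=|\det A_n|^{-1}\exp\tfrac12\big(\|x\|^2-\|A_n^{-1}x\|^2\big)$ and then exploit assumption (iii): $\|A_n\|\Le1$ means $\|A_ny\|\Le\|y\|$ for all $y\in\R^n$, and applying this with $y=A_n^{-1}x$ gives $\|x\|\Le\|A_n^{-1}x\|$, so the exponential factor never exceeds $1$. Together with $|\det A_n|^{-1}\Le\varepsilon^{-1}$ coming from assumption (i), this yields $\sup_{n\in\N}\|\hsf^{A_n}\|_{L^\infty(\MUN{n})}\Le\varepsilon^{-1}<\infty$.

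With (i)--(iii) established, Proposition \ref{sufbten} delivers both $C_A\in\bb(L^2(\munfty))$ and the fact that $C_A$ is the strong-operator-topology limit of $\{C_{A_n}\otimes I_n\}_{n\in\N}$, which is exactly the assertion. I do not anticipate a genuine obstacle here: the only potentially subtle point, relating the finite-dimensional sections $A_n$ to the genuinely infinite transformation $A$, collapses to the elementary coordinate-stabilization used in hypothesis (i), thanks to the row-finiteness assumption (ii).
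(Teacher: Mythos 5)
Your argument is correct and follows essentially the same route as the paper: both proofs reduce the corollary to Proposition \ref{sufbten}, with assumption (ii) supplying the coordinate-stabilization condition (i) of that proposition. The only difference is in how the uniform bound $\sup_{n\in\N}\|\hsf^{A_n}\|_{L^\infty(\MUN{n})}<\infty$ is obtained: the paper quotes \cite[Lemma 2.1 and Proposition 2.2]{sto} together with \eqref{norm}, giving the exact value $\|C_{A_n}\|^2=|\det A_n|^{-1}$ for contractive $A_n$, whereas you verify the bound $\hsf^{A_n}\Le|\det A_n|^{-1}\Le\varepsilon^{-1}$ directly from \eqref{norminv} via $\|x\|\Le\|A_n^{-1}x\|$, an equally valid and self-contained verification.
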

\begin{proof}
By \cite[Lemma 2.1 and Proposition 2.2]{sto} the operator $C_{\A_n}$ is bounded on $L^2\big(\MUN{n}\big)$
and $\|C_{\A_n}\|^2=|\det \A_n|^{-1}$ for every $n\in\N$. This and \eqref{norm} imply
    \begin{align*}
    \sup_{n\in\N}\|\hsf^{\A_n}\|_{L^\infty(\mu_n)}=\sup_{n\in\N}\|C_{\A_n}\|^2=\bigg(\inf_{n\in\N}|\det \A_n|\bigg)^{-1}<\infty.
    \end{align*}
Since condition (ii) of Corollary \ref{sufgau} yields condition (i) of Proposition \ref{sufbten}, we get the desired conclusion by Proposition \ref{sufbten}.
\end{proof}
The following example of a bounded composition operator in $L^2(\munfty)$ appeared in \cite{mla} and \cite{sto} (it was studied by use of different techniques, not applicable for general matrical symbols).
\begin{exa}
Let $\{a_n\}_{n\in\N}$ be a sequence of real numbers satisfying $0<|a_n|\leq1$ for all $n\in\N$ and $\prod_{n\in\N}|a_n|^{-\frac12}<\infty$.
Let $A\colon\R^\infty\to\R^\infty$ be defined by
    \begin{align*}
    A(x_1,x_2,\ldots)=(a_1x_1,a_2x_2,\ldots),\quad (x_1,x_2,\ldots)\in\R^\infty.
    \end{align*}
By Corollary \ref{sufgau}, $C_\A\in\bb\big(L^2(\munfty)\big)$ and $C_\A=C_{a_1}\otimes C_{a_2}\otimes\dots$.
\end{exa}
Another example of a bounded composition operator in $L^2(\munfty)$ is given below.
\begin{exa}
Let $A\colon\R^\infty\to\R^\infty$ be induced by the matrix $(a_{ij})_{i,j\in\N}$ given by
\begin{align*}
a_{ij}=\left\{%
\begin{array}{ll}
    \exp(-{1/}{i^2}) , & \hbox{for $i=j$;} \\
    \alpha_i, & \hbox{for $j=i+1$;} \\
    0       , & \hbox{otherwise,} \\
\end{array}%
\right.
\end{align*}
where $(\alpha_i)_{i\in\N}$ is a sequence of positive real numbers such that $(a_{ij})_{i,j=1}^n$ is a contraction in $\bb(\R^n)$ for every $n\in\N$. By Corollary \ref{sufgau}, $C_\A\in\bb\big(L^2(\munfty)\big)$.
\end{exa}
As shown below, we can modify the measure $\mu$ from Example \ref{exatriangle1} so that the composition operator induced by the transformation $A$ of $\R^\infty$ as in Example \ref{exatriangle1} is densely defined unbounded (and it does not act in $L^2(\munfty)$).
\begin{exa}\label{exaexa}
Let $\rho_1\D\M_1\otimes \rho_2 \D\M_1 \otimes \ldots$ be the measure defined in Example \ref{exatriangle1}. We will add a hump to density function of every third factor (counted from the second) in the tensor product. For this we consider $\{a_{3i-1}\}_{i\in\N}\subseteq[0,\infty)$, $\{b_{3i-1}\}_{i\in\N}\subseteq(0,\infty)$ and $r>\prod_{i=1}^\infty \alpha_i$ such that
    \begin{itemize}
    \item[(v)] $a_{3i-1}<b_{3i-1}$ for every $i\in\N$,
    \item[(vi)] $b_{3i-1}-a_{3i-1}<M_{3i-1}$ for every $i\in\N$,
    \item[(vii)] $\prod_{i=1}^\infty \big(r^2(b_{3i-1}-a_{3i-1}+2M_{3i-1})+\alpha_{3i-1}^2\big)<\infty$,
    \end{itemize}
where $\{M_{i}\}_{i\in\N}\subseteq(0,\infty)$, $\{\alpha_i\}_{i\in\N}\subseteq(1,\infty)$ are as in Example \ref{exatriangle1} with additional requirement that (v)-(vii) hold (see Appendix  \ref{app}). Let $\{\phi_{3i-1}\colon[a_{3i-1},b_{3i-1}]\to(0,\infty)\ |\ i\in\N\}$ be continuous and satisfy
\begin{itemize}
\item[(viii)] $\sup_{x,y\in[a_{3i-1},b_{3i-1}]}\frac{\phi_{3i-1}(y)}{\phi_{3i-1}(x)}\Ge r$,
\item[(ix)] $\phi_{3i-1}(b_{3i-1})\Le \phi_{3i-1}(x)$ for $x\in[a_{3i-1},b_{3i-1}]$,
\item[(x)] $\int_{[a_{3i-1},b_{3i-1}]}\phi_{3i-1}(x)\M_1(\D x)=\int_{[a_{3i-1},b_{3i-1}]}\rho_{3i-1}(x)\M_1(\D x)$.
\end{itemize}
Define
    \begin{align*}
    \hat\rho_{3i-1}(x)=\left\{%
\begin{array}{ll}
    \phi_{3i-1}(x), & \hbox{$x\in[a_{3i-1},b_{3i-1}]$;} \\
    \rho_{3i-1}(x), & \hbox{$x\notin[a_{3i-1},b_{3i-1}]$.} \\
\end{array}%
\right.
    \end{align*}
Now let $\mu= \eta_1 \D\M_1\otimes \eta_2 \D\M_1\otimes \ldots$, where
\begin{align*}
\eta_n=\left\{%
\begin{array}{ll}
    \hat\rho_n, & \hbox{$n=3k-1$ for $k=1,2,3,\ldots$;} \\
    \rho_n, & \hbox{otherwise.} \\
\end{array}%
\right.
\end{align*}
We may assume that $\eta_i\Le 1$ for $i\in\N$ (see Appendix  \ref{app}). If $\{A_n\}_{n\in\N}$ are as in Example \ref{exatriangle1}, then the family of Radon-Nikodym derivatives $\{\hsf^{A_n}\}_{n\in\N}$, where $\hsf^{A_n}=\frac{\D\mu_n\circ A_n^{-1}}{\D\mu_n}$ with $\mu_n=\eta_1\D\M_1\otimes\ldots\otimes\eta_n\D\M_1$, satisfies $\sup_{n\in\N}\|\hsf^{A_n}\|_{L^2(\mu_n)}<\infty$. Indeed, first fix $k\in\N$ and take $n=3k-1$. Then
    \begin{align*}
    \int\limits_{\R^n}&(\hsf^{A_n})^2\D\mu_n
    =\int\limits_{\R^n}\left(\frac{\eta_1(x_1)\eta_2(x_2+p_2(x_1))\cdots \eta_n(x_n+p_{n}(x_{n-1}))}{\eta_1(x_1)\eta_2(x_2)\cdots \eta_n(x_n)}\right)^2\D\mu_n\\
    &\Le\Big(\prod_{i=1}^n\alpha_i^2\Big)
    \int\limits_{\R^n}\left(\frac{\prod_{i=1}^k\hat\rho_i(x_{3i-1}+p_{3i-1}(x_{3i-2}))}{\prod_{i=1}^k\hat\rho_i(x_{3i-1})}\right)^2\prod_{i=1}^n\eta_i(x_{i})\D\M_n\\
    &\Le C\prod_{i=1}^k\int\limits_{\R^2}\left(\frac{\eta_{3i-1}(x_{3i-1}+p_{3i-1}(x_{3i-2}))}{\eta_{3i-1}(x_{3i-1})}\right)^2\eta_{3i-1}(x_{3i-1})\eta_{3i-2}(x_{3i-2})\D\M_2.
    \end{align*}
For $i\in\{1,\ldots,k\}$, we can divide $\R^2$ into two disjoint sets $\Omega_i$ and $\R^2\setminus\Omega_i$ where
\begin{align*}
\Omega_i=\{(y,x)\in\R^2| x+p_{3i-1}(y)\in[a_{3i-1},b_{3i-1}]\, \vee\, x\in[a_{3i-1},b_{3i-1}]\}.
\end{align*}
Then the $\eta_{3i-1}\otimes\eta_{3i-2}\D\M_2$ measure of $\Omega_i$ is less or equal to $b_{3i-1}-a_{3i-1}+2M_{3i-1}$ and
    \begin{align*}
    \frac{\eta_{3i-1}(x+p_{3i-1}(y))}{\eta_{3i-1}(x)}\Le r,\quad (y,x)\in\Omega_i.
    \end{align*}
Hence we obtain
    \begin{multline*}
    \int\limits_{\R^2}\left(\frac{\eta_{3i-1}(x_{3i-1}+p_{3i-1}(x_{3i-2}))}{\eta_{3i-1}(x_{3i-1})}\right)^2\eta_{3i-1}(x_{3i-1})\eta_{3i-2}(x_{3i-2})\D\M_2\\
    \Le r^2(b_{3i-1}-a_{3i-1}+2M_{3i-1})+\alpha_{3i-1}^2,\quad i=1,\ldots,k.
    \end{multline*}
Consequently, we get
    \begin{align*}
    \int_{\R^{3k-1}}|\hsf^{A_{3k-1}}|^2d\mu_{3k-1}\Le C \prod_{i=0}^k
    \big(r^2(b_{3i-1}-a_{3i-1}+2M_{3i-1})+\alpha_{3i-1}^2\big).
    \end{align*}
This means that the Radon-Nikodym derivatives $\{\hsf^{A_n}\}_{n\in\N}$ are uniformly bounded in $L^2$ as desired. By Theorem \ref{absconA}, the operator $C_A$, where $A$ is as in Example \ref{exatriangle1}, is densely defined and closed.

Now we prove that $C_A$ is unbounded. This follows from the fact that the Radon-Nikodym derivatives $\{\hsf^{A_n}\}_{n\in\N}$ are not uniformly bounded in $L^\infty$ norm. Indeed, since the image of the function $\R\times [a_{k},b_{k}]\ni(x, y)\to y+p_k(x)\in \R$ contains the interval $[a_k,b_k]$ for every $k=3i-1$, $i\in\Z_+$, we see that for every $i\in\N$ there are $(\hat x_{3i-2}, \hat x_{3i-1})\in\R\times [a_{3i-i},b_{3i-1}]$ such that
\begin{align*}
\frac{\eta_{3i-1}(\hat x_{3i-1}+p_{3i-1}(\hat x_{3i-2}))}{\eta_{3i-1}(\hat x_{3i-1})}\Ge r,\quad i\in\N.
\end{align*}
This and properties of $\rho_i$'s and $p_i$'s imply that for every fixed $x_{3i+1}\in\R$ we have
\begin{align*}
\frac{\eta_{3i-1}(\hat x_{3i-1}+p_{3i-1}(\hat x_{3i-2}))\eta_{3i}(x_{3i}+p_{3i}(x_{3i-1}))\eta_{3i+1}(x_{3i+1}+p_{3i+1}(x_{3i}))}{\eta_{3i-1}(\hat x_{3i-1})\eta_{3i}(x_{3i})\eta_{3i+1}(x_{3i+1})}\Ge r
\end{align*}
for some sufficiently small $x_{3i}<0$, for every $i\in\N$. As a consequence, we have
\begin{align*}
&\esup_{x\in\R^{3i+1}} \hsf^{A_{3i+1}}(x)
=\esup_{x\in\R^{3i+1}} \frac{(\eta_1\otimes\ldots\otimes\eta_{3i+1})\circ A_{3i+1}^{-1}(x)}{(\eta_1\otimes\ldots\otimes\eta_{3i+1})(x)}\\
&=\esup_{x\in\R^{3i+1}}\frac{\eta_1(x_1)\eta_2(x_2+p_2(x_1))\cdots \eta_n(x_{3i+1}+p_{{3i+1}}(x_{{3i}}))}{\eta_1(x_1)\eta_2(x_2)\cdots \eta_{3i+1}(x_{3i+1})}
\Ge r^i,\quad i\in\N
\end{align*}
which proves our claim.
\end{exa}
\appendix
\section{}\label{app}
In this appendix we provide $\{\alpha_i\}_{i\in\N}$, $\{M_{i}\}_{i\in\N}$, $\{\rho_i\}_{i\in\N}$,  $\{a_{3i-1}\}_{i\in\N}$, $\{b_{3i-1}\}_{i\in\N}$ and $\{\phi_{3i-1}\}_{i\in\N}$ satisfying conditions (i)-(x) Of Examples \ref{exatriangle1} and \ref{exaexa}.

Let us begin with a decreasing sequence $\{\alpha_i\}_{i\in\N}\subseteq(1,\infty)$ such that $\prod_{i\in\N}\alpha_i<\infty$. Choose $\{\beta_{3i-1}\}_{i\in\N}\subseteq(0,\infty)$ so that $\prod_{i\in\N}(\beta_{3i-1}+\alpha_{3i-1}^2)<\infty$. Now, to obtain all objects required in Example \ref{exatriangle1} we proceed in steps.

{\bf Step 1.} For $i\in\N$, we set $M_i=2^{-1}(1-\alpha_i^{-1})$ and $\rho_i(x)=\alpha_i^{-n}$ for $x\in \R$ with $nM_i\Le |x|<(n+1)M_i$, $n\in\Z_+$.

It is elementary to verify that $\{M_{i}\}_{i\in\N}$ and $\{\rho_i\}_{i\in\N}$ as in Step 1 satisfy requirements (i)-(iv) of Example \ref{exatriangle1}. Observe that $\{M_{i}\}_{i\in\N}$ is decreasing. Let $r\in\R$ satisfy $r>\prod_{i\in\N}\alpha_i$. Fix $i\in\N$.

{\bf Step 2.} If $3 r^2 M_{3i-1}>\beta_{3i-1}$, then we choose $k\in\N$ such that $3 k^{-1}r^2M_{3i-1}<\beta_{3i-1}$ and substitute $M_n$ by $M_nk^{-1}$ for all $n\Ge i$, leaving $\rho_n$'s as they were. If $3r^2 M_{3i-1}<\beta_{3i-1}$, then we skip any substitutions and go directly to the next step.

{\bf Step 3.} Take $a_{3i-1}=0$ and any $b_{3i-1}\in (0,M_{3i-1})$. Let $\phi_{3i-1}\colon [a_{3i-1},b_{3i-1}]\to(0,\infty)$ be any continuous function such that $\phi_{3i-1}(0)=1$, and $\phi_{3i-1}(b_{3i-1})=r$, and (x) of Example \ref{exaexa} holds.

By elementary calculations we verify that $\{\alpha_i\}_{i\in\N}$, $\{\hat\rho_i\}_{i\in\N}$, $\{M_{i}\}_{i\in\N}$, $\{a_{3i-1}\}_{i\in\N}$, $\{b_{3i-1}\}_{i\in\N}$ and $\{\phi_{3i-1}\}_{i\in\N}$ satisfy conditions (i)-(x) of Examples \ref{exatriangle1} and \ref{exaexa}.

It is possible that for some $i\in\N$, condition $\eta_{3i-1}\Le1$ does not hold (this is possible if $\sup_{x\in[a_{3i-1}, b_{3i-1}]}\phi_{3i-1}>1$). If this is the case, then another slight modification is needed. This may be done in various way. Below we propose a one based on the construction above.

{\bf Step 4.} If $\delta:=\sup_{x\in[a_{3i-1}, b_{3i-1}]}\phi_{3i-1}>1$, then there exists $x_0>0$ such that $1=x_0+\delta^{-1}\int_{\R}\eta_{3i-1}(t)\M_1(\D t)$.
The modified $\eta_{3i-1}$, which satisfies all the requirements, has the form
\begin{align*}
\eta_{3i-1}(t)= \left\{
                   \begin{array}{ll}
                     \delta^{-1}\eta_{3i-1}(t), & \hbox{for $t\Le0$;} \\
                     1, & \hbox{for $t\in[0,x_0]$;} \\
                     \delta^{-1}\eta_{3i-1}(t-x_0), & \hbox{for $t\Ge x_0$.}
                   \end{array}
                 \right.
\end{align*}

\end{document}